\documentclass[twoside]{article}
\usepackage{latexsym}
\usepackage{pstricks}
\usepackage{amssymb}
\usepackage{amsmath}
\usepackage{amsfonts}
\usepackage{amsthm,array}
\usepackage{array}
\usepackage{epsfig}
\usepackage{graphicx}
\usepackage[numbers,sort&compress]{natbib}

\newtheorem{theorem}{Theorem}[section]
\newtheorem{proposition}{Proposition}[section]
\newtheorem{lemma}{Lemma}[section]
\newtheorem{corollary}{Corollary}[section]

\newtheorem{definition}{Definition}[section]

\newtheorem{remark}{Remark}[section]
\newtheorem{example}{Example}[section]

\newtheorem{remark-definition}{Remark and Definition}[section]
\newtheorem{rem-not}{Remark and Notation}[section]

\begin{document}
\title{\bf Verifiable Criteria and Properties for Interval $B_{\pi}^{R^I}$-Tensors\thanks{The first author's work was supported by the National Natural Science Foundation of P.R. China (Grant No.12171064).}
}
\date{}
\author{Li Ye, Yisheng Song\thanks{Corresponding author E-mail: yisheng.song@cqnu.edu.cn}\\
	School of Mathematical Sciences,  Chongqing Normal University, \\
	Chongqing, 401331, P.R. China. \\ Email: neutrino1998@126.com (Ye); yisheng.song@cqnu.edu.cn (Song)}
\maketitle

{\noindent\bf Abstract.} This paper introduces interval $B_{\pi}^{R^I}$-tensors as a natural extension of $B_{\pi}^{R}$-tensors to the interval setting. We provide two practical verifiable criteria for an interval tensor to be an interval $B_{\pi}^{R^I}$-tensor, one based on endpoint inequalities and another constructing an explicit vector $\pi$. Connections with interval $P$-tensors, positive definite interval tensors, and interval $Z$-tensors are established. Applications in polynomial optimization and interval tensor complementarity problems are briefly discussed.

\vspace{.3cm}
{\noindent\bf Mathematics Subject Classification.} 90C23, 15A69, 15A72, 90C30, 65G40
\vspace{.3cm}

{\noindent\bf Keywords.} $B_{\pi}^{R^I}$-tensor, Interval tensor, $P$-tensor, Positive definiteness.

\section{Introduction}
\setcounter{section}{1}
Tensors, as higher-order generalizations of vectors and matrices, are widely used in polynomial optimization, spectral hypergraph theory, magnetic resonance imaging, higher-order Markov chains, and tensor complementarity problems \cite{b1,b2,b3,b4,ql2017}. With the growing scale of practical applications, the study of algebraic structures and numerical properties of tensors has become an active research frontier. Among the various directions, the identification of structured tensors and their use in determining positive definiteness, providing eigenvalue bounds, and ensuring solvability of complementarity problems has received particular attention \cite{sq2015,dlq2018}.

In matrix theory, the class of $B$-matrices which is a subclass of $P$-matrices, has been extensively studied due to its favourable properties in providing error bounds for linear complementarity problems \cite{op2021}. Moreover, the verifiable conditions in its definition greatly facilitate algorithmic analysis. Inspired by this success, Song and Qi \cite{sq2015} systematically extended the concepts of $B$-matrices and $P$-matrices to higher-order tensors, introducing $B$-tensors and $P$-tensors. They established that an even-order symmetric $B$-tensor is positive definite. Subsequently, many results on $B$-tensors emerged. For example, Qi and Song \cite{qs2014} proved that every even-order symmetric $B$-tensor is positive definite. Li et al. \cite{cy2015} introduced double $B$-tensors and quasi-double $B$-tensors, enriching the structural decomposition theory of $B$-tensor classes. Li et al. \cite{d2} investigated the properties of MB-tensors and MB$_0$-tensors.

In 2019, Orera and Pe\~{n}a \cite{bpi2} introduced the more general class of $B_{\pi}^{R}$-tensors. Unlike classical $B$-tensors where all off-diagonal entries are bounded by the fixed constant $\frac{1}{n}$, a $B_{\pi}^{R}$-tensor employs a tunable vector $\pi$ and row sum based scaling $R_i$, which significantly enhances flexibility. They proved that every odd-order $B_{\pi}^{R}$-tensor is a $P$-tensor, and every symmetric even-order $B_{\pi}^{R}$-tensor is also a $P$-tensor, hence positive definite. Later, building on this framework, He et al. \cite{bpi1} studied several classes of nonsingular tensors and discussed their applications.

In many practical problems, model coefficients are not known exactly but only known to lie within given intervals. Interval analysis has been successfully applied to matrices and linear systems. In recent years, this idea was extended to tensors. Bozorgmanesh et al. \cite{boz2020} introduced interval tensors, discussing eigenvalue intervals, positive definiteness conditions, and applications in multilinear systems. In the same year, Rahmati and Tawhid \cite{sr2020} studied properties of intervals and sets of hypermatrices. Subsequently, Beheshti et al. \cite{bf2022} investigated properties of several classes of interval tensors, enriching the theoretical system. Cui et al. \cite{CZ2023} derived bounds for the H-eigenvalues of even order real symmetric interval tensors as well as negative interval tensors.

In the interval matrix setting, Lorenc \cite{lm2023} systematically characterized interval $B_{\pi}^{R^I}$-matrices, interval $B$-matrices and interval double $B$-matrices. Very recently, Ye and Song \cite{ys2026} studied interval $B$-tensors and interval double $B$-tensors, providing criteria based solely on the endpoint tensors. However, despite the rich theory of interval $B_{\pi}^{R^I}$-matrices, the corresponding study of interval $B_{\pi}^{R^I}$-tensors, which is the natural interval extension of $B_{\pi}^{R}$-tensors, remains unexplored.

This paper aims to fill this gap. We first provide a corrected characterization of $B_{\pi}^{R}$-tensors. We then define interval $B_{\pi}^{R^{I}}$-tensors and establish two practical verifiable criteria. Moreover, we connect interval $B_{\pi}^{R^{I}}$-tensors with interval $P$-tensors, positive definite interval tensors, and interval $Z$-tensors. Applications in polynomial optimization and interval tensor complementarity problems are discussed.

The paper is organized as follows. Section 2 recalls preliminaries. Section 3 studies properties of $B_{\pi}^{R}$-tensors. Section 4 develops the interval theory. Section 5 presents applications.

\section{Preliminaries}
\setcounter{section}{2}

Let $m$ and $n$ be positive integers. A real $m$th-order $n$-dimensional tensor $\mathcal{A}=(a_{i_1i_2\cdots i_m})$ is a multi-indexed array over the real field, where each index $i_j$ takes values in the index set $[n] := \{1,2,\dots,n\}$ for $j=1,2,\dots,m$. The space consisting of all such tensors is denoted by $T_{m,n}$. An entry $a_{i_1i_2\cdots i_m}$ is called a diagonal entry if $i_1=i_2=\cdots=i_m$, otherwise, it is called an off-diagonal entry. For any fixed index $i\in[n]$, the family of entries $a_{ii_2\cdots i_m}$ with $i_2,\dots,i_m\in[n]$ is referred to as the $i$th row of $\mathcal{A}$. The sum of all entries in the $i$th row, termed the $i$th row sum of $\mathcal{A}$, is defined as
$$R_i(\mathcal{A}):=\sum\limits_{i_2,\cdots,i_m=1}^na_{ii_2\cdots i_m}.$$
We write $R(\mathcal{A}):=(R_1(\mathcal{A}),R_2(\mathcal{A}),\cdots,R_n(\mathcal{A}))$ for the vector of row sums.
The tensor $\mathcal{I}\in T_{m,n}$ denotes the identity tensor whose diagonal entries equal to $1$ and all off-diagonal entries equal to $0$. A tensor $\mathcal{A}\in T_{m,n}$ is called symmetric if its entries are invariant under any permutation of their indices. For brevity, we use $[n]^{m-1}$ to denote the set of $(m-1)$-dimensional index tuples $(i_2,\cdots,i_m)$ with $i_2,\cdots,i_m\in[n]$, i.e., $[n]^{m-1}:=\{(i_2,\cdots,i_m):i_2,\cdots,i_m\in[n]\}$.

For a tensor $\mathcal{A}= (a_{i_1\cdots i_m})\in T_{m,n}$, and an $n\times n$ matrix $M=(m_{ij})$, the product $\mathcal{A}\times_1 M\times_2M\times_3\cdots\times_m M$ is an $m$th-order $n$-dimensional tensor whose entries are given by
$$(\mathcal{A}\times_1 M\times_2M\times_3\cdots\times_m M)_{i_1i_2\cdots i_m}=\sum\limits_{j_1j_2\cdots j_m=1}^n a_{j_1j_2\cdots j_m}m_{i_1j_1}\cdots m_{i_mj_m}.$$
For a vector $\mathbf{x}\in\mathbb{R}^n$, the product $\mathcal{A}\mathbf{x}^{m-1}$ is a vector in $\mathbb{R}^n$ whose $i$th component is defined by
$$(\mathcal{A}\mathbf{x}^{m-1})_i:=\sum\limits_{i_2,\cdots i_m=1}^n a_{ii_2\cdots i_m}x_{i_2}\cdots x_{i_m},\qquad 1\leq i \leq n.$$


\begin{definition}\label{def-bt}\textup{\cite{bpi1,bpi2}}
Let $\pi=(\pi_1,\pi_2,\cdots,\pi_n)\in\mathbb{R}^n$ be a nonnegative vector. For any multi-index $J=(i_2,\cdots,i_m)\in[n]^{m-1}$, define the product-form
$$\pi_J=\pi_{i_2\cdots i_m}:=\pi_{i_2}\cdots\pi_{i_m}.$$
Suppose $\pi$ satisfies the sum constraint
\begin{equation}\label{sumpi}
0<\sum\limits_{J\in[n]^{m-1}}\pi_{J}\leq1
\end{equation}
A tensor $\mathcal{A} = (a_{i_1i_2\cdots i_m})\in T_{m,n}$ is said to be a $B_{\pi}^R$-tensor with $R=R(\mathcal{A})$, if the following two conditions hold for all $i\in[n]$:
\begin{itemize}
  \item [(a)]  $R_i(\mathcal{A})>0$,
  \item [(b)]  $\pi_JR_{i}(\mathcal{A})>a_{iJ}\mbox{ for all }J\in[n]^{m-1} \setminus\{(i,\cdots,i)\}.$
\end{itemize}
\end{definition}
Throughout this paper, whenever no confusion can arise, the superscript $R$ in $B_{\pi}^R$ is understood to denote the row sum vector $R(\mathcal{A})$ of $\mathcal{A}$ under consideration.

\begin{remark}
In the case where $\pi_{i}=\frac{1}{n}$ for all $i\in[n]$, this class of a $B_{\pi}^R$-tensor coincides with the class of a $B$-tensor.
\end{remark}

\begin{definition}\label{def-pt}\textup{\cite{dlq2018}}
A tensor $\mathcal{A}\in T_{m,n}$ is called a $P$-tensor if for every nonzero vector $x\in\mathbb{R}^n$, there exists an index $i\in[n]$ such that
$$x_{i}^{m-1}(\mathcal{A}x^{m-1})_i>0.$$
\end{definition}

\begin{proposition}\label{p1}\textup{\cite{bpi2}}
Let $\mathcal{A} = (a_{i_1i_2\cdots i_m})\in T_{m,n}$ be a $B_{\pi}^R$-tensor. If $m$ is odd, then $\mathcal{A}$ is a $P$-tensor. If $m$ is even and $\mathcal{A}$ is symmetric, then $\mathcal{A}$ is also a $P$-tensor.
\end{proposition}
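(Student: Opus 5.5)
Following the approach of Song and Qi for $B$-tensors, I will reduce to an $M$-type tensor via a decomposition. Write $\mathcal{A}=\mathcal{B}^++\mathcal{C}$. Here $\mathcal{C}$ is the tensor whose $i$th row has every entry equal to $r_i:=\pi_{J_i^*}R_i(\mathcal{A})$, up to the $\pi$-weights: concretely, $c_{iJ}=\pi_JR_i(\mathcal{A})$ for all $J\in[n]^{m-1}$. Then $\mathcal{B}^+:=\mathcal{A}-\mathcal{C}$ has entries $b_{iJ}=a_{iJ}-\pi_JR_i$. By condition (b), every off-diagonal entry of $\mathcal{B}^+$ is strictly negative. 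The row sums are
$$R_i(\mathcal{B}^+)=R_i(\mathcal{A})\Bigl(1-\sum_J\pi_J\Bigr)\ge 0$$
by (a) and \eqref{sumpi}. Hence $\mathcal{B}^+$ is a $Z$-tensor with nonnegative row sums, so it is diagonally dominant with nonnegative diagonal. For nonzero $x$, I will use $\mathcal{C}x^{m-1}$, whose $i$th component is $R_i(\mathcal{A})\bigl(\sum_j\pi_jx_j\bigr)^{m-1}$, since $\pi_J$ has product form.

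For odd $m$, take any nonzero $x$ and let $i$ be an index with $|x_i|=\max_j|x_j|>0$. Then
$$x_i^{m-1}(\mathcal{B}^+x^{m-1})_i\ge |x_i|^{m-1}\Bigl(b_{i\cdots i}|x_i|^{m-1}-\sum_{J\neq(i,\dots,i)}|b_{iJ}||x_J|\Bigr)\ge |x_i|^{2m-2}R_i(\mathcal{B}^+)\ge0 .$$
Because $m-1$ is even, $x_i^{m-1}\ge0$ and $(\sum_j\pi_jx_j)^{m-1}\ge0$, so the $\mathcal{C}$-term is also nonnegative. The remaining work is strictness. Equality in the first estimate would force $|x_J|=|x_i|^{m-1}$ for all $J$, which means all $|x_j|$ are equal. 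Here the off-diagonal entries are \emph{strictly} negative, so every one of them contributes, and sign patterns must align. In that case I will instead argue directly that $\sum_j\pi_jx_j\ne0$ or some other estimate is strict. If this becomes delicate, I will switch to the standard contradiction argument: assume $x_i(\mathcal{A}x^{m-1})_i\le0$ for all $i$, where odd $m$ allows sign control since $x_i^{m-1}\ge0$. Then apply the above at the maximal index, and use the strict inequality $a_{iJ}<\pi_JR_i$ to get $(\mathcal{A}x^{m-1})_i x_i>0$.

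For even $m$ with $\mathcal{A}$ symmetric, I will show positive definiteness, $\mathcal{A}x^m>0$ for $x\ne0$. This implies the $P$-property, since if $x_i^{m-1}(\mathcal{A}x^{m-1})_i\le0$ for all $i$, then summing gives $\mathcal{A}x^m=\sum_ix_i(\mathcal{A}x^{m-1})_i$, with the same sign as $x_i^{m-1}$. Here $x_i^{m-1}$ has the sign of $x_i$ because $m-1$ is odd, so $\mathcal{A}x^m\le0$, a contradiction. To get positive definiteness I will write $\mathcal{A}x^m=\mathcal{B}^+x^m+\sum_iR_ix_i(\sum\pi_jx_j)^{m-1}$. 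The second term is not sign-definite. So, mimicking Qi--Song, I will further split the $\mathcal{C}$ part into a symmetric sum of rank-one-like tensors. I will absorb the cross terms using symmetry: $a_{iJ}$ appears in rows of every index in $\{i\}\cup J$, which lets $\pi_JR_i$ be compared with $\pi$ weights on other rows. I will then reduce to the fact that a symmetric, diagonally dominant $Z$-tensor with positive diagonal is positive (semi)definite.

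This even symmetric case is the main obstacle, because the $\mathcal{C}$-term lacks a sign. My first attempt is to bound it from below by $-\sum_i R_i\pi$-weighted off-diagonal mass and show $\mathcal{B}^+$ dominates it strictly, via the strict inequalities in (b).
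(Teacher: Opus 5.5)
The paper quotes this result from Orera and Pe\~{n}a without proof, so I judge your argument on its own. Your odd-order argument is correct. $\mathcal{B}^+$ is a $Z$-tensor with strictly negative off-diagonal entries and row sums $R_i(\mathcal{A})(1-\sum_J\pi_J)\ge 0$. The product form of $\pi_J$ gives $(\mathcal{C}x^{m-1})_i=R_i(\mathcal{A})(\sum_j\pi_jx_j)^{m-1}\ge 0$, so at a maximal index both contributions to $x_i^{m-1}(\mathcal{A}x^{m-1})_i$ are nonnegative. The strictness step closes directly, with no contradiction detour needed. Equality is only possible if $\sum_J\pi_J=1$. In that case, testing $J=(j,i,\dots,i)$ forces $|x_j|=|x_i|$ and $x_jx_i^{m-2}\ge 0$, hence $x_jx_i\ge0$ because $m-2$ is odd. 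So $x=x_i\mathbf{1}$, and the $\mathcal{C}$-term equals $|x_i|^{m-1}R_i(\mathcal{A})\bigl(x_i\sum_j\pi_j\bigr)^{m-1}>0$. Your reduction of the even case to positive definiteness is also fine.

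The even-order symmetric case has a genuine gap: you give no actual argument, and the route you sketch cannot work. Even for symmetric $\mathcal{A}$, your $\mathcal{C}$ is not symmetric, because $c_{iJ}=\pi_JR_i(\mathcal{A})$ depends on which index sits in the first slot. Hence $\mathcal{B}^+$ is not symmetric either, the fact about symmetric diagonally dominant $Z$-tensors does not apply to it, and $\mathcal{B}^+x^m$ can be negative. Already for $m=2$, take $\mathcal{A}=\mathrm{diag}(2,2\epsilon)$ with $0<\epsilon\ne1$ and $\pi=(\tfrac12,\tfrac12)$. Then $\mathcal{B}^+=\begin{pmatrix}1&-1\\-\epsilon&\epsilon\end{pmatrix}$ and $\mathcal{C}=\begin{pmatrix}1&1\\ \epsilon&\epsilon\end{pmatrix}$, with quadratic forms $(x_1-x_2)(x_1-\epsilon x_2)$ and $(x_1+x_2)(x_1+\epsilon x_2)$, both indefinite, although $\mathcal{A}$ is positive definite. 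Moreover, when $\sum_J\pi_J=1$ the rows of $\mathcal{B}^+$ have zero slack, and the strict inequalities in (b) give no uniform margin, so there is nothing for $\mathcal{B}^+$ to dominate with.

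The observation you make in passing is the right ingredient: by symmetry, each entry $a_{i_1\cdots i_m}$ is bounded in row $i_k$ for every $k$. It must be used to build a symmetric correction tensor. Put
$$c_{i_1\cdots i_m}:=\min_{1\le k\le m}R_{i_k}(\mathcal{A})\prod_{l\ne k}\pi_{i_l}.$$
Then (b) and symmetry give $a_{i_1\cdots i_m}<c_{i_1\cdots i_m}$ off the diagonal. Also $c_{iJ}\le\pi_JR_i(\mathcal{A})$, which gives $R_i(\mathcal{A}-\mathcal{C})\ge R_i(\mathcal{A})(1-\sum_J\pi_J)\ge0$. So $\mathcal{M}:=\mathcal{A}-\mathcal{C}$ is a symmetric $Z$-tensor with strictly negative off-diagonal entries and nonnegative row sums, and AM--GM plus symmetry give $\mathcal{M}x^m\ge\sum_iR_i(\mathcal{M})x_i^m\ge0$ for even $m$.

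Next, $c_{i_1\cdots i_m}=0$ unless every $\pi_{i_l}>0$. Relabel the support of $\pi$ as $\{1,\dots,p\}$ so that $\rho_j:=R_j(\mathcal{A})/\pi_j$ is nonincreasing, and set $\rho_{p+1}:=0$. Then $c_{i_1\cdots i_m}=\rho_{\max_l i_l}\prod_l\pi_{i_l}$, and therefore
$$\mathcal{C}x^m=\sum_{t=1}^{p}(\rho_t-\rho_{t+1})\Bigl(\sum_{j\le t}\pi_jx_j\Bigr)^m\ge0.$$
This is the $\pi$-weighted analogue of Qi--Song's partially all-one tensors. Strictness follows as in your odd case. $\mathcal{A}x^m=0$ forces $\mathcal{M}x^m=0$, hence equality in the sign and AM--GM estimates, hence $x=s\mathbf{1}$ with $s\ne0$. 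The $t=p$ term alone then gives $\mathcal{C}x^m\ge\rho_p\,s^m(\sum_j\pi_j)^m>0$, a contradiction.
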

For any $J\in[n]^{m-1}$ and $\mathcal{A} = (a_{i_1i_2\cdots i_m})\in T_{m,n}$, define
$$M_{iJ}(\mathcal{A}):=\frac{a_{iJ}}{R_i(\mathcal{A})} \mbox{ for } i\in[n],$$
and
$$M_{J}(\mathcal{A}):=\max\limits_{\substack{i\in[n]\\ (i,\cdots,i)\neq J}} M_{iJ}(\mathcal{A}).$$

In Theorem 6.2 of \cite{bpi1}, the authors provided a characterization of $B_\pi^R$-tensors. Through a careful re-examination of their complete proof process, we find that the theorem implicitly uses the critical condition that, the value $\pi_J$ must be decomposable into the product of component values $\pi_{i_2}\pi_{i_3}\cdots\pi_{i_m}$ for any multi-index $J=(i_2,i_3,\cdots,i_m)\in[n]^{m-1}$. However, this core premise was completely omitted from the final theorem statement, resulting in the original condition being only necessary but not sufficient. To provide a rigorous and accurate characterization and avoid erroneous judgments in practical applications, we propose a corrected equivalent proposition and elaborate on the limitations of the original theorem through a constructive counterexample.

\begin{proposition}\label{p2}
Let $\mathcal{A} = (a_{i_1i_2\cdots i_m})\in T_{m,n}$ with $R_i(\mathcal{A})>0$ for all $i\in[n]$. Then $\mathcal{A}$ is a $B_{\pi}^R$-tensor if and only if the nonnegative vector $\pi$ satisfies condition \eqref{sumpi}, and for all $J\in[n]^{m-1}$,
$\pi_J>M_J(\mathcal{A}).$
\end{proposition}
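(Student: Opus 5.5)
Since $R_i(\mathcal{A})>0$ for every $i\in[n]$ by hypothesis, condition (a) of Definition \ref{def-bt} holds automatically. The proof therefore reduces to showing that condition (b), for a $\pi$ of the product form $\pi_J=\pi_{i_2}\cdots\pi_{i_m}$ obeying \eqref{sumpi}, is equivalent to $\pi_J>M_J(\mathcal{A})$ for all $J\in[n]^{m-1}$. The one tool needed is division by the positive number $R_i(\mathcal{A})$, which preserves strict inequalities. Thus, for fixed $i$ and $J\neq(i,\dots,i)$,
$$\pi_J R_i(\mathcal{A})>a_{iJ}\iff \pi_J>\frac{a_{iJ}}{R_i(\mathcal{A})}=M_{iJ}(\mathcal{A}).$$

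For necessity, assume $\mathcal{A}$ is a $B_\pi^R$-tensor. Then \eqref{sumpi} holds by definition. Fix $J$. For every $i$ with $(i,\dots,i)\neq J$, condition (b) and the equivalence above give $\pi_J>M_{iJ}(\mathcal{A})$. The index set of this maximum is finite, so taking the maximum over it yields $\pi_J>M_J(\mathcal{A})$.

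For sufficiency, assume $\pi$ is nonnegative, of product form, satisfies \eqref{sumpi}, and $\pi_J>M_J(\mathcal{A})$ for all $J$. Given $i$ and $J\neq(i,\dots,i)$, the index $i$ belongs to the set over which $M_J(\mathcal{A})$ is maximized. Hence $M_{iJ}(\mathcal{A})\le M_J(\mathcal{A})<\pi_J$, and multiplying by $R_i(\mathcal{A})>0$ recovers (b).

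The only delicate point is bookkeeping about which $J$ the condition $\pi_J>M_J(\mathcal{A})$ ranges over.

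\textbf{Case $n\ge 2$.} Here the set $\{i:(i,\dots,i)\neq J\}$ is always nonempty, so $M_J(\mathcal{A})$ is well defined for every $J\in[n]^{m-1}$.

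\textbf{Case $n=1$.} For the diagonal tuple $J=(1,\dots,1)$ this set is empty. With the usual convention $\max\emptyset=-\infty$, the requirement $\pi_J>M_J(\mathcal{A})$ is vacuous there, just as (b) imposes nothing on that $J$.

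\textbf{Product form.} We also stress that $\pi_J$ is always read as the product $\pi_{i_2}\cdots\pi_{i_m}$ of a single vector $\pi$. This is exactly the premise omitted from Theorem 6.2 of \cite{bpi1}, and both directions above use it only by treating $\pi_J$ as the fixed number defined from $\pi$.
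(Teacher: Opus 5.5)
Your proof is correct and is essentially the argument the paper relies on. The paper gives no separate proof of Proposition \ref{p2}; it treats the result as immediate from Definition \ref{def-bt}, namely dividing condition (b) by $R_i(\mathcal{A})>0$ and taking the maximum over the admissible rows $i$, exactly as you do, and your remarks on the $n=1$ convention and the product form of $\pi_J$ only make explicit what the paper leaves implicit.
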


\begin{example}Consider a tensor $\mathcal{A} = (a_{i_1i_2i_3})\in T_{3,2}$, with entries
 $$a_{112}=a_{121}=a_{212}=a_{221}=0.4,\ \ a_{122}=a_{211}=0.01,\ \ a_{111}=a_{222}=0.19.$$
 Then
$$R_1(\mathcal{A})=R_2(\mathcal{A})=1>0,$$
$$M_{11}(\mathcal{A})+M_{22}(\mathcal{A})+ M_{12}(\mathcal{A})+M_{21}(\mathcal{A})=0.82<1,$$
where $M_{11}(\mathcal{A})=M_{22}(\mathcal{A})=0.01$ and $M_{12}(\mathcal{A})=M_{21}(\mathcal{A})=0.4.$ Hence the original condition ($\sum\limits_{J\in[n]^{m-1}} M_J(\mathcal{A}) < 1$) would be satisfied.

However, if $\mathcal{A}$ is a $B_{\pi}^R$-tensor for some nonnegative vector $\pi=(\pi_1,\pi_2)$ satisfying \eqref{sumpi}, then $\pi$ would have to meet
$$\pi_1^2>0.01, \ \ \pi_1\pi_2>0.4,\ \ \pi_2^2>0.01.$$
Consequently,
$$\sum\limits_{J\in[2]^{2}}\pi_J=\pi_1^2+2\pi_1\pi_2+\pi_2^2=(\pi_1+\pi_2)^2\geq4\pi_1\pi_2>4\times0.4=1.6>1,$$
which contradicts condition \eqref{sumpi}. Hence no such $\pi$ exists, and therefore $\mathcal{A}$ is not a $B_\pi^R$-tensor.
\end{example}
This counterexample shows that the condition $\sum\limits_{J\in[n]^{m-1}} M_J(\mathcal{A}) < 1$ is not sufficient, the corrected condition, which is existence of a product-form $\pi$ with $\pi_J > M_J(\mathcal{A})$ and sum constraint, is necessary and sufficient.

\begin{definition}\label{def-dd}\textup{\cite{qs2014}}
A tensor $\mathcal{A} = (a_{i_1i_2\cdots i_m})\in T_{m,n}$ is called diagonally dominant if for all $i\in[n]$,
$$a_{ii\cdots i}\geq\sum\limits_{J\in[n]^{m-1}\setminus\{(i,\cdots,i)\}}|a_{iJ}|.$$
It is diagonally dominant if the inequality is strict for all $i\in[n]$.
\end{definition}

\begin{definition}\label{def-zt}\textup{\cite{zq2014}}
A tensor $\mathcal{A} = (a_{i_1i_2\cdots i_m})\in T_{m,n}$ is called a $Z$-tensor if all of its off-diagonal entries are non-positive, that is, $a_{iJ}\leq0$ whenever $J\in[n]^{m-1}\setminus\{(i,\cdots,i)\}$.
\end{definition}

\begin{proposition}\label{p4}\textup{\cite{bpi2}}
If $\mathcal{A} = (a_{i_1i_2\cdots i_m})\in T_{m,n}$ is a $Z$-tensor. Then $\mathcal{A}$ is a $B_{\pi}^R$-tensor with any positive vector $\pi\in\mathbb{R}^n$ satisfying condition $\eqref{sumpi}$ if and only if $\mathcal{A}$ is strictly diagonally dominant.
\end{proposition}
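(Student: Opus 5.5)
The statement to prove is Proposition~\ref{p4}: a $Z$-tensor $\mathcal{A}$ is a $B_{\pi}^R$-tensor for every positive $\pi$ satisfying \eqref{sumpi} if and only if $\mathcal{A}$ is strictly diagonally dominant. I read ``strictly diagonally dominant'' as
$$a_{ii\cdots i}>\sum_{J\neq(i,\cdots,i)}|a_{iJ}|=-\sum_{J\neq(i,\cdots,i)}a_{iJ}\quad\text{for all } i.$$
For a $Z$-tensor this is exactly $R_i(\mathcal{A})>0$ for all $i$. So the whole proof reduces to showing that, for a $Z$-tensor, condition (a) of Definition~\ref{def-bt} is equivalent to being $B_\pi^R$ for every admissible $\pi$.

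\emph{Sufficiency.} Assume strict diagonal dominance. By the identity above, $R_i(\mathcal{A})>0$, so (a) holds. Now take any positive $\pi$ satisfying \eqref{sumpi} and any $J\neq(i,\cdots,i)$. Each $\pi_J=\pi_{i_2}\cdots\pi_{i_m}$ is positive, so $\pi_J R_i(\mathcal{A})>0\ge a_{iJ}$, using the $Z$-property. This is (b). Equivalently, every $M_J(\mathcal{A})\le 0<\pi_J$, so Proposition~\ref{p2} applies directly.

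\emph{Necessity.} Assume $\mathcal{A}$ is $B_\pi^R$ for all positive $\pi$ satisfying \eqref{sumpi}. Such a $\pi$ exists: for instance $\pi_i=1/n$ gives $\sum_J\pi_J=1$. Condition (a) for this $\pi$ gives $R_i(\mathcal{A})>0$ for all $i$. Rewriting $R_i$ with $a_{iJ}=-|a_{iJ}|$ for off-diagonal $J$ yields strict diagonal dominance.

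No step is a real obstacle. The only care needed is to note that an admissible positive $\pi$ exists, so the universal hypothesis in the necessity direction is not vacuous. It is also worth remarking that positivity of $\pi$ is what makes $\pi_J R_i>0$ strict in the sufficiency direction, including when some $a_{iJ}=0$.
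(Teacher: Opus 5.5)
Your proof is correct. The paper gives no proof of its own here; it simply quotes the result from Orera and Pe\~{n}a \cite{bpi2}. Your argument is the natural direct verification: for a $Z$-tensor, strict diagonal dominance is exactly $R_i(\mathcal{A})>0$, and condition (b) then holds automatically because $\pi_J R_i(\mathcal{A})>0\geq a_{iJ}$ whenever $\pi$ is positive.

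One point is worth stating explicitly. Your necessity step uses only a single admissible positive $\pi$ (condition (a) alone gives $R_i(\mathcal{A})>0$). So it also proves the ``for some positive $\pi$'' reading of the statement. That is the form the paper actually needs in the necessity part of Proposition~\ref{p4.3}, where $\underline{\mathcal{A}}$ is known to be a $B_{\pi}^R$-tensor only for one fixed $\pi$.
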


\begin{proposition}\label{p23}\textup{\cite{bpi1}}
Let nonnegative vector $\pi\in\mathbb{R}^n$ satisfy condition $\eqref{sumpi}$, and let $\mathcal{A}\in T_{m,n}$ be a $B_{\pi}^R$-tensor. Then for all $i\in[n]$,
\begin{itemize}
  \item [(a)] $a_{ii\cdots i}>\pi_{i\cdots i}R_i(\mathcal{A})$;
  \item [(b)] $a_{ii\cdots i}>a_{iJ}$ if $\pi_{i\cdots i}\geq \pi_{J}$, $J\in[n]^{m-1}\setminus\{(i,\cdots,i)\}$.
\end{itemize}
\end{proposition}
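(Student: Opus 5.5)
Both parts follow by summing condition (b) of Definition~\ref{def-bt} over the $i$th row and using the sum constraint \eqref{sumpi}. Fix $i\in[n]$ and write $R_i=R_i(\mathcal{A})>0$, which holds by condition (a).

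\textbf{Part (a).} Condition (b) gives $a_{iJ}<\pi_J R_i$ for every $J\in[n]^{m-1}\setminus\{(i,\cdots,i)\}$. Summing these inequalities over all such $J$ gives
$$R_i - a_{ii\cdots i}=\sum_{J\neq(i,\cdots,i)} a_{iJ} \le R_i\sum_{J\neq(i,\cdots,i)}\pi_J .$$
The inequality is strict when the index set is nonempty. That is always the case unless $n=1$ and $m\ge 2$; in that degenerate case the claim reads $a_{1\cdots1}>\pi_1^{m-1}a_{1\cdots1}$. This follows from $a_{1\cdots 1}=R_1>0$ together with $\pi_1^{m-1}\le 1$, but it needs $\pi_1^{m-1}<1$, so the case $\pi_1=1$ requires a separate remark.

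By \eqref{sumpi} we have $\sum_{J\neq(i,\cdots,i)}\pi_J\le 1-\pi_{i\cdots i}$. Combining this with $R_i>0$,
$$R_i-a_{ii\cdots i}<R_i(1-\pi_{i\cdots i}),$$
which rearranges to $a_{ii\cdots i}>\pi_{i\cdots i}R_i$.

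\textbf{Part (b).} Let $J\neq(i,\cdots,i)$ with $\pi_{i\cdots i}\ge\pi_J$. Since $R_i>0$, part (a) and condition (b) give the chain
$$a_{ii\cdots i}>\pi_{i\cdots i}R_i\ge \pi_J R_i> a_{iJ}.$$

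\textbf{Main obstacle.} The only delicate point is strictness in part (a). It comes from summing at least one strict inequality, which requires $n\ge2$ (or $m=1$, where the statement is vacuous in a trivial way). The degenerate case $n=1$ should be handled or noted separately, exactly as in the original source \cite{bpi1}.
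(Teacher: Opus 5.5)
Your proof is correct and is the standard argument. The paper gives no proof of its own, quoting the result from \cite{bpi1}: summing condition (b) of Definition~\ref{def-bt} over the off-diagonal entries of row $i$, bounding $\sum_{J\neq(i,\cdots,i)}\pi_J\le 1-\pi_{i\cdots i}$ via \eqref{sumpi}, and then chaining $a_{ii\cdots i}>\pi_{i\cdots i}R_i\ge\pi_JR_i>a_{iJ}$ is all that is needed.

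The degenerate cases should be stated more sharply. For $n=1$ and $\pi_1=1$, part (a) is genuinely false, not merely in need of a ``separate remark'': the tensor with single entry $1$ is a $B_{\pi}^R$-tensor with $a_{1\cdots1}=\pi_{1\cdots1}R_1$. For $m=1$, part (a) is not vacuous but reads $a_i>a_i$, so your parenthetical there is wrong. The statement therefore tacitly assumes $n\ge2$ and $m\ge2$, which is exactly when your sum contains at least one strict inequality.
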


\begin{proposition}\label{p24}\textup{\cite{bpi1}}
Let nonnegative vector $\pi\in\mathbb{R}^n$ satisfy condition $\eqref{sumpi}$, and let $\mathcal{A}\in T_{m,n}$ be a $B_{\pi}^R$-tensor. If $\alpha\in\mathbb{R}^n$ also satisfies condition $\eqref{sumpi}$ and $\alpha\geq\pi$, then $\mathcal{A}$ is also a $B_{\alpha}^R$-tensor.
\end{proposition}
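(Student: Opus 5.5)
The statement to prove is Proposition \ref{p24}: if $\mathcal{A}$ is a $B_{\pi}^R$-tensor, $\alpha$ satisfies \eqref{sumpi} and $\alpha\geq\pi$, then $\mathcal{A}$ is a $B_{\alpha}^R$-tensor. The plan is to verify the two conditions of Definition \ref{def-bt} for $\alpha$ directly. The row sums $R_i(\mathcal{A})$ do not depend on the choice of the vector, so condition (a), $R_i(\mathcal{A})>0$, carries over unchanged from the hypothesis that $\mathcal{A}$ is a $B_{\pi}^R$-tensor. The sum constraint \eqref{sumpi} for $\alpha$ is assumed. What remains is condition (b).

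\textbf{Step 1: monotonicity of the product form.} Fix $J=(i_2,\dots,i_m)\in[n]^{m-1}\setminus\{(i,\dots,i)\}$. Since $0\leq\pi_k\leq\alpha_k$ for every $k\in[n]$, multiplying these nonnegative inequalities factor by factor gives
$$\pi_J=\pi_{i_2}\cdots\pi_{i_m}\leq\alpha_{i_2}\cdots\alpha_{i_m}=\alpha_J.$$
This step uses both the nonnegativity of $\pi$ and the product form of $\pi_J$; the product form is exactly the point stressed before Proposition \ref{p2}.

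\textbf{Step 2: chaining the inequalities.} Because $R_i(\mathcal{A})>0$, Step 1 gives $\alpha_J R_i(\mathcal{A})\geq\pi_J R_i(\mathcal{A})$. The $B_{\pi}^R$ property gives $\pi_J R_i(\mathcal{A})>a_{iJ}$. Combining the two yields $\alpha_J R_i(\mathcal{A})>a_{iJ}$ for all $i$ and all admissible $J$, which is condition (b) for $\alpha$.

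Alternatively, one could route the argument through Proposition \ref{p2}: from $\alpha_J\geq\pi_J>M_J(\mathcal{A})$ for all $J$ together with \eqref{sumpi} for $\alpha$, the characterization gives the conclusion at once. The direct argument avoids subtleties over which $J$ enter $M_J$, so I would present the direct version. I expect no real obstacle; the only point needing care is that the inequalities are multiplied factor by factor, which is valid only because all entries are nonnegative.
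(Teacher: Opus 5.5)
Your proof is correct and is the natural direct argument: nonnegativity of $\pi$ (and hence of $\alpha\geq\pi$) gives $\pi_J\leq\alpha_J$ factor by factor, and multiplying by $R_i(\mathcal{A})>0$ and chaining with $\pi_J R_i(\mathcal{A})>a_{iJ}$ gives condition (b) for $\alpha$. The paper does not prove this statement itself; it quotes it from \cite{bpi1}. Your factorwise comparison of product-form weights is the same step the paper uses in the proof of Theorem \ref{th3}, so your approach matches the paper's reasoning.
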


Let $\underline{\mathcal{A}}=(\underline{a}_{i_1i_2\cdots i_m})$ and $\overline{\mathcal{A}}=(\overline{a}_{i_1i_2\cdots i_m})\in T_{m,n}$ with $\underline{\mathcal{A}}\leq\overline{\mathcal{A}}$, where $\underline{\mathcal{A}}\leq\overline{\mathcal{A}}$ are to be understood componentwise. The set of tensors
$$\mathcal{A}^I=[\underline{\mathcal{A}},\overline{\mathcal{A}}]= \{\mathcal{A}:\underline{\mathcal{A}}\leq \mathcal{A}\leq \overline{\mathcal{A}}\}$$
is called an interval tensor.

Denote the center and the radius of interval by
$$\mathcal{A}^c=(a^c_{i_1i_2\cdots i_m})=\frac{\underline{\mathcal{A}}+\overline{\mathcal{A}}}{2}, \mbox{ and }\Delta=(\delta_{i_1i_2\cdots i_m})=\frac{\overline{\mathcal{A}}-\underline{\mathcal{A}}}{2},$$
 so that $\mathcal{A}^I=[\mathcal{A}^c- \Delta, \mathcal{A}^c+ \Delta]$. Clearly, $\Delta$ is a nonnegative tensor, i.e., all its entries $\delta_{i_1i_2\cdots i_m}\geq0$. An interval tensor $\mathcal{A}^I$ is called symmetric if both $\mathcal{A}^c$ and $\Delta$ are symmetric. It is worth mentioning that an interval tensor $\mathcal{A}^I$ is symmetric does not means each tensor $\mathcal{A}\in\mathcal{A}^I$ is symmetric.

\begin{definition}\textup{\cite{bf2022,sr2020,boz2020}}
An interval tensor $\mathcal{A}^I$ is called an interval $Z$-tensor (resp. interval $P$-tensor, positive definite interval tensor) if every tensor $\mathcal{A}\in\mathcal{A}^I$ is a $Z$-tensor (resp. $P$-tensor, positive definite tensor).
\end{definition}

Write
$$\mathcal{A}^z:=\mathcal{A}^c-\Delta\times_1 T_z\times_2 T_z\times_3\cdots\times_m T_z,$$
where $T_z$ is the $n\times n$ diagonal matrix with diagonal entries taken from a vector
$$z\in Y= \{z\in \mathbb{R}^n: |z_j|=1 \mbox{ for } j\in[n]\}.$$

\begin{lemma}\label{pip}\textup{\cite{bf2022}}
An interval tensor $\mathcal{A}^I$ in $T_{m,n}$ is an interval $P$-tensor if and only if $\mathcal{A}^z$ is a $P$-tensor for any $z\in Y$.
\end{lemma}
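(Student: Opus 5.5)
The plan is to prove the two implications separately. The \emph{only if} direction is a membership check. The \emph{if} direction is an extremal argument: for a fixed test vector $x$, a suitable sign vector $z$ makes $\mathcal{A}^z$ the worst tensor in $\mathcal{A}^I$.

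\emph{Necessity.} Fix $z\in Y$. The entries of $\mathcal{A}^z$ are
$$a^z_{i_1\cdots i_m}=a^c_{i_1\cdots i_m}-\delta_{i_1\cdots i_m}z_{i_1}\cdots z_{i_m}.$$
Since $|z_{i_1}\cdots z_{i_m}|=1$ and $\delta\ge 0$, each entry lies in $[a^c-\delta,\,a^c+\delta]$. Hence $\mathcal{A}^z\in\mathcal{A}^I$. If $\mathcal{A}^I$ is an interval $P$-tensor, then $\mathcal{A}^z$ is a $P$-tensor.

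\emph{Sufficiency.} Argue by contradiction. Suppose some $\mathcal{A}\in\mathcal{A}^I$ is not a $P$-tensor. Then there is $x\neq 0$ with $x_i^{m-1}(\mathcal{A}x^{m-1})_i\le 0$ for every $i\in[n]$. Write $\mathcal{A}=\mathcal{A}^c+\mathcal{D}$ with $|\mathcal{D}|\le\Delta$ entrywise. This gives, for each $i$,
$$x_i^{m-1}(\mathcal{A}x^{m-1})_i\;\ge\; x_i^{m-1}(\mathcal{A}^cx^{m-1})_i-|x_i|^{m-1}\sum_{J}\delta_{iJ}|x_J|,$$
where $x_J=x_{i_2}\cdots x_{i_m}$. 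The goal is a single $z\in Y$ for which the right-hand side equals $x_i^{m-1}(\mathcal{A}^zx^{m-1})_i$ for all $i$ simultaneously. Then $x_i^{m-1}(\mathcal{A}^zx^{m-1})_i\le 0$ for all $i$, contradicting that $\mathcal{A}^z$ is a $P$-tensor.

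\emph{Choice of $z$.} Take $z_j=\operatorname{sign}(x_j)$, with $z_j=1$ when $x_j=0$. Then
$$\delta_{iJ}z_iz_Jx_i^{m-1}x_J=\delta_{iJ}\,z_i^{m}\,|x_i|^{m-1}|x_J|.$$
For even $m$ we have $z_i^m=1$, so the identity holds and the proof closes.

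\emph{Main obstacle: odd $m$.} Here $z_i^m=z_i$, so the rows with $x_i<0$ carry the wrong sign. Replacing $x$ by $-x$ does not help, because for odd $m$ both $x_i^{m-1}$ and $x^{m-1}$ are even in $x$. For these rows one would need $z_iz_J=\operatorname{sign}(x_J)$, but $z_J$ is already fixed by the requirement $z_Jx_J=|x_J|$.

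My first attempt would be to check whether the cited source's convention for $\mathcal{A}^z$ (or for $P$-tensors with odd $m$) removes this issue. Failing that, the fallback is a finer reduction. Since the inequality above is row-separable, I would try to show that $P$-ness of $\mathcal{A}^I$ is decided by the row-wise extremal tensors $\mathcal{A}^c-\Delta_{(i)}$, where row $i$ uses signs $\operatorname{sign}(x_J)$. I would then attempt to relate these tensors to the family $\{\mathcal{A}^z\}$. I expect this odd-order sign bookkeeping to be the genuinely hard step, and it may be that the lemma needs an additional hypothesis or a modified $\mathcal{A}^z$ in that case.
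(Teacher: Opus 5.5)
Your necessity argument is correct, and so is your sufficiency argument for even $m$. It is the standard sign-vector domination argument. The paper gives no proof of its own, since the lemma is imported from the cited source, and its only explicit use, in Corollary \ref{c4.1}, is for even $m$. So the part you proved is the part the paper needs.

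The odd-$m$ case is not a bookkeeping obstacle that a finer reduction could remove. With $P$-tensors as in Definition \ref{def-pt}, the ``if'' direction is false for odd $m$. Take $m=3$, $n=2$ and, for $i=1,2$, $a_{i11},a_{i22}\in[1,2]$ and $a_{i12},a_{i21}\in[0,\tfrac{3}{2}]$. An entry $a_{ijk}$ of $\mathcal{A}^z$ sits at its lower endpoint when $z_iz_jz_k=1$ and at its upper endpoint otherwise. So the rows $\bigl((\mathcal{A}^zx^2)_1,(\mathcal{A}^zx^2)_2\bigr)$ are:
\begin{itemize}
\item $(x_1^2+x_2^2,\;x_1^2+x_2^2)$ for $z=(1,1)$;
\item $(2x_1^2+3x_1x_2+2x_2^2,\;2x_1^2+3x_1x_2+2x_2^2)$ for $z=(-1,-1)$;
\item $(x_1^2+3x_1x_2+x_2^2,\;2x_1^2+2x_2^2)$ for $z=(1,-1)$;
\item $(2x_1^2+2x_2^2,\;x_1^2+3x_1x_2+x_2^2)$ for $z=(-1,1)$.
\end{itemize}
All four are $P$-tensors. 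In the first two, every row is a positive definite form. In the last two, use the row carrying $2x_1^2+2x_2^2$ whenever the corresponding $x_i\neq0$. Otherwise $x$ is a multiple of a unit vector $e_j$, and the other row gives $x_j^2\cdot x_j^2>0$. Yet the tensor $W\in\mathcal{A}^I$ with $a_{i11}=a_{i22}=1$ and $a_{i12}=a_{i21}=\tfrac{3}{2}$ satisfies $x_i^2(Wx^2)_i=1-3+1=-1<0$ for $i=1,2$ at $x=(1,-1)$. Hence $W$ is not a $P$-tensor, and $\mathcal{A}^I$ is not an interval $P$-tensor.

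This is exactly the mechanism you isolated. For odd $m$ the minimizing entries are $a^c_{iJ}-\delta_{iJ}\operatorname{sign}(x_J)$, with no factor $z_i$. In the example, $\mathcal{A}^{(1,-1)}$ and $\mathcal{A}^{(-1,1)}$ each realize the worst case in only one row.

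Your fallback does yield a correct criterion, but for a different family. Because $\operatorname{sign}(x_J)$ does not depend on $i$, your row-wise extremal tensors are a single tensor,
\[
\widetilde{\mathcal{A}}^z:=\mathcal{A}^c-\Delta\times_2T_z\times_3\cdots\times_mT_z,
\]
with entries $a^c_{iJ}-\delta_{iJ}z_J$, and it lies in $\mathcal{A}^I$. Your domination argument then shows that, for odd $m$, $\mathcal{A}^I$ is an interval $P$-tensor if and only if every $\widetilde{\mathcal{A}}^z$, $z\in Y$, is a $P$-tensor.

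Alternatively, if $P$-tensors are defined via the sign of $x_i(\mathcal{A}x^{m-1})_i$, your original argument works for every $m$, since then $z_ix_i=|x_i|$. That class, however, is empty for odd $m$.

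So your proposal amounts to a complete proof for even $m$, plus the correct observation that the statement must be restricted or modified for odd $m$. No proof of it as written can exist.
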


Inspired by the existing definition of various classes of interval structured tensors, and the corresponding notion of interval $B_{\pi}^R$-matrix (see \cite{lm2023}), we analogously introduced the definitions of an interval $B_{\pi}^R$-tensor.

\begin{definition}
Let $\mathcal{A}^I$ be an interval tensor in $T_{m,n}$, and let nonnegative vector $\pi\in\mathbb{R}^n$ satisfy condition \eqref{sumpi}. Denote $R^I=[R(\underline{\mathcal{A}}),R(\overline{\mathcal{A}})]=\{R(\mathcal{A}):\mathcal{A}\in\mathcal{A}^I\}$.
The interval tensor $\mathcal{A}^I$ is called an interval $B_{\pi}^{R^I}$-tensor if every tensor $\mathcal{A}\in\mathcal{A}^I$ is a $B_{\pi}^R$-tensor with row sum vector $R=R(\mathcal{A})\in R^I$.
\end{definition}

\section{$B_{\pi}^R$-Tensors}
\setcounter{section}{3}
We first derive several fundamental properties that reveal the structural characteristics of this tensor class.
\begin{proposition}\label{th2}
Let nonnegative vector $\pi\in\mathbb{R}^n$ satisfy condition $\eqref{sumpi}$ and let $\mathcal{A}\in T_{m,n}$ be a $B_{\pi}^R$-tensor. Then
\begin{itemize}
  \item [(a)] $a_{kk\cdots k}>a_{kJ}$ for all $J\in[n]^{m-1}\setminus\{(k,\cdots,k)\}$, where $k=\arg\max\limits_{i\in[n]}\{\pi_i\}$;
  \item [(b)] $a_{iJ}<0$ whenever $\pi_J=0$, for all $J\in[n]^{m-1}\setminus\{(i,\cdots,i)\}$.
\end{itemize}
\end{proposition}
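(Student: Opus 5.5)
Part (a) follows from Proposition~\ref{p23}(b), and part (b) is a direct reading of condition (b) in Definition~\ref{def-bt} using the positivity of the row sums.

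For (a), let $k$ be an index with $\pi_k=\max_{i\in[n]}\pi_i$. Since $\pi$ is nonnegative, for any multi-index $J=(i_2,\dots,i_m)\in[n]^{m-1}$ we have $0\le \pi_{i_j}\le \pi_k$ for each $j$. Multiplying these $m-1$ inequalities between nonnegative numbers gives
$$\pi_J=\pi_{i_2}\cdots\pi_{i_m}\le \pi_k^{m-1}=\pi_{k\cdots k}.$$
Thus the hypothesis $\pi_{k\cdots k}\ge\pi_J$ of Proposition~\ref{p23}(b) holds for every $J\ne(k,\dots,k)$. Applying that proposition with $i=k$ yields $a_{kk\cdots k}>a_{kJ}$.

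If one prefers not to rely on Proposition~\ref{p23}, the same conclusion can be reached directly. Condition (b) of Definition~\ref{def-bt} gives $a_{kJ}<\pi_JR_k(\mathcal{A})\le \pi_{k\cdots k}R_k(\mathcal{A})$, where the second step uses $R_k(\mathcal{A})>0$. Proposition~\ref{p23}(a) gives $\pi_{k\cdots k}R_k(\mathcal{A})<a_{kk\cdots k}$. Chaining the two inequalities gives the claim.

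For (b), fix $i$ and $J\neq(i,\dots,i)$ with $\pi_J=0$. Condition (b) of Definition~\ref{def-bt} gives $a_{iJ}<\pi_JR_i(\mathcal{A})=0$. No other ingredient is needed.

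The only real step is the monotonicity $\pi_J\le\pi_k^{m-1}$. It rests on the product form of $\pi_J$ together with the nonnegativity of $\pi$, which is exactly the product-form structure emphasized before Proposition~\ref{p2}. I expect no serious obstacle beyond stating it carefully. The argument also covers the case where the maximizing index $k$ is not unique: any maximizer works.
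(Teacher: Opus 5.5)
Your proposal is correct and follows the paper's proof: part (a) via Proposition~\ref{p23}(b), and part (b) read off directly from condition (b) of Definition~\ref{def-bt}. You also make explicit the step $\pi_J\le\pi_k^{m-1}=\pi_{k\cdots k}$, which follows from the product form and nonnegativity of $\pi$ and is what licenses applying Proposition~\ref{p23}(b); the paper leaves this implicit when it says (a) ``follows directly.''
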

\begin{proof}
$(a)$ follows directly from $(b)$ of Proposition \ref{p23}. Next, we prove $(b)$ of this Proposition.

Because $\mathcal{A}$ is a $B_{\pi}^R$-tensor, for every $i\in[n]$, we have
$$R_i(\mathcal{A})>0,$$
and for every $J\in[n]^{m-1}\setminus\{(i,\cdots,i)\}$, there is
$$a_{iJ}<\pi_{J}R_i(\mathcal{A}).$$
If $\pi_J=0$, the right-hand side is non-positive, hence $a_{iJ}<0$.
\end{proof}

While Proposition \ref{p2} provides an equivalent characterization of $B_{\pi}^{R}$-tensors, it requires a pre-specified vector $\pi$ satisfying the product-form condition, which is often not readily available in practical applications. To address this limitation, we establish a constructive sufficient condition that allows us to explicitly construct such a vector $\pi$ when it exists.

\begin{theorem}\label{p3}
Let $\mathcal{A} = (a_{i_1i_2\cdots i_m})\in T_{m,n}$ with $R_i(\mathcal{A})>0$ for all $i\in[n]$. If
$$S:=\sum\limits_{i=1}^n[\max\limits_{\{J\in[n]^{m-1}:i\in J\}}M_J]_+^{\frac{1}{m-1}}<1,$$
 where $[a]_+=\max\{0,a\}$, then there exists a constructible positive vector $\pi=(\pi_1,\pi_2,\cdots,\pi_n)\in\mathbb{R}^n$ satisfying condition \eqref{sumpi} such that $\mathcal{A}$ is a $B_{\pi}^R$-tensor.
\end{theorem}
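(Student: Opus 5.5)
We will construct $\pi$ explicitly. For each $i\in[n]$ set
$$c_i:=\Big[\max_{\{J\in[n]^{m-1}:\,i\in J\}}M_J\Big]_+^{\frac{1}{m-1}}\ \ge 0,$$
so that $S=\sum_i c_i<1$. Put $\varepsilon:=\frac{1-S}{n}>0$ and define $\pi_i:=c_i+\varepsilon$ for $i\in[n]$. Then every $\pi_i$ is strictly positive, and $\sum_i\pi_i=S+n\varepsilon=1$. By Proposition \ref{p2}, since $R_i(\mathcal{A})>0$ for all $i$, it suffices to verify two things: the sum constraint \eqref{sumpi}, and the strict inequality $\pi_J>M_J(\mathcal{A})$ for every $J\in[n]^{m-1}$.

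For the sum constraint we use the product form. We have
$$\sum_{J\in[n]^{m-1}}\pi_J=\Big(\sum_{i=1}^n\pi_i\Big)^{m-1}=1.$$
This lies in $(0,1]$, so \eqref{sumpi} holds.

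For the entrywise inequality, fix $J=(i_2,\dots,i_m)$. Each index $i_k$ occurring in $J$ satisfies $i_k\in J$. Hence $c_{i_k}^{m-1}\ge [M_J]_+\ge M_J$, that is, $c_{i_k}\ge [M_J]_+^{1/(m-1)}$. Therefore
$$\pi_J=\prod_{k=2}^m(c_{i_k}+\varepsilon)>\prod_{k=2}^m c_{i_k}\ge \big([M_J]_+^{1/(m-1)}\big)^{m-1}=[M_J]_+\ge M_J.$$
The strict inequality holds because all factors are nonnegative and $\varepsilon>0$: each factor $c_{i_k}+\varepsilon$ is at least $\varepsilon>0$ and strictly exceeds $c_{i_k}$, so the product strictly exceeds $\prod_k c_{i_k}$. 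This covers the case where some $c_{i_k}=0$, because then the left product is still positive. Proposition \ref{p2} then yields that $\mathcal{A}$ is a $B_\pi^R$-tensor. Alternatively, one can check condition (b) of Definition \ref{def-bt} directly: $\pi_J R_i>M_J R_i\ge M_{iJ}R_i=a_{iJ}$ for $J\ne(i,\dots,i)$.

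The main point needing care is the strictness of $\pi_J>M_J$ when $M_J\le0$ or some $c_i=0$. The positive slack $\varepsilon$ handles it uniformly, and it is also what makes $\pi$ positive as the statement requires. A minor subtlety is that $M_J$ is a maximum only over $i$ with $(i,\dots,i)\neq J$. This is harmless: for diagonal $J$ we only need the inequality for off-diagonal positions, and $M_J$ is exactly what Proposition \ref{p2} uses.
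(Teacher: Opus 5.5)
Your proof is correct and follows the paper's own argument essentially step for step. The shared steps are the choice $\pi_i=c_i+\varepsilon$ with $\varepsilon=\frac{1-S}{n}$, the product-form identity $\sum_J\pi_J=(\sum_i\pi_i)^{m-1}=1$, and the chain $\pi_J>\prod_k c_{i_k}\ge M_J$, which gives $\pi_J R_i(\mathcal{A})>a_{iJ}$. Your justification of the strict inequality (each factor is at least $\varepsilon>0$) and of $c_{i_k}^{m-1}\ge[M_J]_+$ fills in details the paper leaves implicit.
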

\begin{proof}
For each $i\in[n]$, choose any $0<\varepsilon\leq\frac{1-S}{n}$, and set
$$\pi_i=[\max\limits_{\{J\in[n]^{m-1}:i\in J\}}M_J]_+^{\frac{1}{m-1}}+\varepsilon>0.$$
(For instance, one may simply take $\varepsilon=\frac{1-S}{n}$). Then
$$\sum_{i=1}^n \pi_i=S+n\cdot\frac{1-S}{n}=1,$$
and consequently
$$\sum\limits_{J\in[n]^{m-1}} \pi_{J}=\sum_{i_2,\cdots,i_m=1}^n \pi_{i_2}\cdots \pi_{i_m}=(\sum_{i=1}^n \pi_i)^{m-1}=1,$$
which means that $\pi$ satisfies condition \eqref{sumpi}.

Now take any $i\in[n]$, and any $J=(i_2,\cdots,i_m)\in[n]^{m-1}\setminus\{(i,\cdots,i)\}$. We have
\begin{align}\label{pj}
\pi_{J}=\pi_{i_2}\cdots\pi_{i_m}>\prod\limits_{k=2}^m[\max\limits_{\{K\in[n]^{m-1}:i_k\in K\}}M_K]_+^{\frac{1}{m-1}}\geq M_J,
\end{align}
and therefore
$$\pi_{J}R_i(\mathcal{A})>a_{iJ}.$$
Hence $\mathcal{A}$ is a $B_{\pi}^R$-tensor.
\end{proof}
\begin{remark}
In the construction of $\pi$ in Theorem \ref{p3}, the parameter $\varepsilon$ cannot be taken as $0$. If $\varepsilon=0$, then there may exists some multi-index $J$ such that the strict inequality in \eqref{pj} may not hold, which violates the condition $(b)$ of Definition \ref{def-bt}.
\end{remark}
In particular, for nonnegative tensors, the above condition can be simplified since all $M_{J}$ values are automatically nonnegative, eliminating the need for the positive part operator.
\begin{corollary}\label{c3.1}
Let 
$\mathcal{A} = (a_{i_1i_2\cdots i_m})\in T_{m,n}$ be a nonnegative tensor with $R_i(\mathcal{A})>0$ for all $i\in[n]$. If $$S_0:=\sum\limits_{i=1}^n(\max\limits_{\{J\in[n]^{m-1}:i\in J\}}M_J)^{\frac{1}{m-1}}<1,$$
then there exists a constructible positive vector $\pi=(\pi_1,\pi_2,\cdots,\pi_n)\in\mathbb{R}^n$ satisfying condition \eqref{sumpi} such that $\mathcal{A}$ is a $B_{\pi}^R$-tensor.
\end{corollary}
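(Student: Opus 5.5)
This corollary is a direct specialization of Theorem \ref{p3}, so the plan is to verify its hypotheses and then identify $S_0$ with $S$. First, since $\mathcal{A}$ is nonnegative and $R_i(\mathcal{A})>0$ for every $i\in[n]$, each ratio $M_{iJ}(\mathcal{A})=a_{iJ}/R_i(\mathcal{A})$ is nonnegative. Hence every $M_J(\mathcal{A})$, being a maximum of such ratios over the nonempty set of $i$ with $(i,\cdots,i)\neq J$, is nonnegative. (When $n\geq 2$ this set is always nonempty; the degenerate case $n=1$, where $[n]^{m-1}$ consists only of the diagonal index, is trivial and can be handled separately.)

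It follows that for each $i\in[n]$ the quantity $\max_{\{J\in[n]^{m-1}:i\in J\}}M_J$ is nonnegative. Therefore $[\,\cdot\,]_+$ acts as the identity on it, and
$$S=\sum_{i=1}^n\Big[\max_{\{J:i\in J\}}M_J\Big]_+^{\frac{1}{m-1}}=\sum_{i=1}^n\Big(\max_{\{J:i\in J\}}M_J\Big)^{\frac{1}{m-1}}=S_0 .$$
Note also that the $(m-1)$th root is well defined, since it is taken of a nonnegative number. Thus the hypothesis $S_0<1$ is exactly the hypothesis $S<1$ of Theorem \ref{p3}.

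Invoking Theorem \ref{p3} then yields the explicit positive vector
$$\pi_i=\Big(\max_{\{J:i\in J\}}M_J\Big)^{\frac{1}{m-1}}+\varepsilon,\qquad 0<\varepsilon\leq\frac{1-S_0}{n},$$
which satisfies \eqref{sumpi} and makes $\mathcal{A}$ a $B_\pi^R$-tensor. There is no real obstacle here. The only point that needs care is confirming that the nonnegativity of all $M_J$ really follows from $\mathcal{A}\geq 0$ together with $R_i(\mathcal{A})>0$, which we have checked above.
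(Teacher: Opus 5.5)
Your proposal is correct and follows essentially the same route as the paper: nonnegativity of $\mathcal{A}$ together with $R_i(\mathcal{A})>0$ gives $M_J\geq 0$, so $[\,\cdot\,]_+$ can be dropped, $S=S_0<1$, and Theorem \ref{p3} applies. Your extra remark on the degenerate case $n=1$, where $M_J$ would be a maximum over an empty index set, is a small point of care that the paper does not mention.
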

\begin{proof}
Since $\mathcal{A}$ is nonnegative, $M_J\geq0$ for all $J\in[n]^{m-1}$. Hence
$$[\max\limits_{\{J\in[n]^{m-1}:i\in J\}}M_J]_+^{\frac{1}{m-1}}=(\max\limits_{\{J\in[n]^{m-1}:i\in J\}}M_J)^{\frac{1}{m-1}}.$$
It follows that $S=S_0<1$. The conclusion follows directly from Theorem \ref{p3}.
\end{proof}

\begin{example}
Let $\mathcal{A} \in T_{3,2}$ be defined by
$$a_{111} = 90, \quad a_{112} = a_{121} = 5, \quad a_{122} = 0,$$
$$a_{222} = 30, \quad a_{211} = 60, \quad a_{212} = a_{221} = 5.$$
The row sums are
$$R_1(\mathcal{A}) =R_2(\mathcal{A}) = 100 > 0.$$
Compute the $M_J$ values as following.
$$M_{11}=\frac{a_{211}}{R_2}=\frac{60}{100}=\frac{3}{5},$$ $$M_{12}=M_{21}=\max\left\{\frac{a_{112}}{R_1},\frac{a_{212}}{R_2}\right\}=\frac{5}{100}=\frac{1}{20},$$
$$M_{22}=\frac{a_{122}}{R_1}=0.$$
Then
$$S_0=\sqrt{\max_{J\ni1}M_J}+\sqrt{\max_{J\ni2}M_J}=\sqrt{\frac{3}{5}}+\sqrt{\frac{1}{20}}=\frac{2\sqrt{15}+\sqrt{5}}{10}<1.$$
By Corollary \ref{c3.1}, we construct the vector
\begin{align*}
\pi_1 &= \sqrt{\frac{3}{5}} + \frac{1-S_0}{2} = \frac{10 + 2\sqrt{15} - \sqrt{5}}{20}, \\
\pi_2 &= \sqrt{\frac{1}{20}} + \frac{1-S_0}{2} = \frac{10 - 2\sqrt{15} + \sqrt{5}}{20}.
\end{align*}
which satisfies $\sum_{J\in[2]^2}\pi_J=1$. It is straightforward to verify that $\pi_J>M_J$ for all $J\in[2]^2$, confirming that $\mathcal{A}$ is a $B_{\pi}^{R}$-tensor.
\end{example}
This example also shows that there exists a $B_{\pi}^{R}$-tensor that is not a B-tensor.
Since $a_{211} = 60 >\frac{100}{2}=50$ in this example, which violates the B-tensor condition.
This example highlights the greater flexibility of $B_{\pi}^{R}$-tensors, that is the adjustable vector $\pi$ allows larger off-diagonal entries in specific positions, expanding their applicability.

For symmetric even-order tensors, a tensor is a $P$-tensor implies it is positive definite \cite{sq2015}. Then by Proposition \ref{p1}, we obtain a sufficient condition of positive definiteness for a symmetric tensor, which is equivalent to the positivity of its H-eigenvalues \cite{q2005}.

\begin{corollary}\label{c3.2}
Let $m$ be even, let $\mathcal{A} = (a_{i_1i_2\cdots i_m})\in T_{m,n}$ be a symmetric tensor, and let $R_i(\mathcal{A})>0$ for all $i\in[n]$. If $S<1$, then $\mathcal{A}$ is positive definite, and consequently all H-eigenvalues of $\mathcal{A}$ are positive.
\end{corollary}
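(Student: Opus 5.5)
The plan is to chain three earlier results: Theorem \ref{p3}, then Proposition \ref{p1} in its even-order symmetric case, then the known fact that even-order symmetric $P$-tensors are positive definite \cite{sq2015}. The hypotheses of Corollary \ref{c3.2} are exactly those of Theorem \ref{p3}, namely $R_i(\mathcal{A})>0$ for all $i\in[n]$ and $S<1$. So the first step is to invoke Theorem \ref{p3} to produce the explicit positive vector $\pi$ with $\pi_i=[\max_{\{J\in[n]^{m-1}:i\in J\}}M_J]_+^{\frac{1}{m-1}}+\varepsilon$ and $\varepsilon=\frac{1-S}{n}$. This $\pi$ satisfies \eqref{sumpi}, and $\mathcal{A}$ is a $B_{\pi}^R$-tensor for it.

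Second, since $m$ is even and $\mathcal{A}$ is symmetric, the second assertion of Proposition \ref{p1} gives that $\mathcal{A}$ is a $P$-tensor. That is, for every nonzero $x$ there is some $i$ with $x_i^{m-1}(\mathcal{A}x^{m-1})_i>0$.

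Third, we pass from $P$-tensor to positive definite using the cited result of \cite{sq2015}: an even-order symmetric $P$-tensor satisfies $\mathcal{A}x^m>0$ for all $x\neq0$. To make this less of a black box, one can sketch the reason. For symmetric even-order $\mathcal{A}$, the minimum of $\mathcal{A}x^m$ on the sphere $\sum x_i^m=1$ is the smallest H-eigenvalue $\lambda$. Its minimizer $x$ satisfies $\mathcal{A}x^{m-1}=\lambda x^{[m-1]}$. The $P$-property then yields an $i$ with $\lambda x_i^{2(m-1)}>0$, so $\lambda>0$.

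For the last claim, let $\lambda$ be any H-eigenvalue with real eigenvector $x\neq0$. Then $\mathcal{A}x^m=x^{\top}\mathcal{A}x^{m-1}=\lambda\sum_i x_i^m$. Because $m$ is even, $\sum_i x_i^m>0$, and positive definiteness gives $\mathcal{A}x^m>0$, hence $\lambda>0$; this is the equivalence of \cite{q2005}. There is no real obstacle here. The only point needing care is that symmetry is used in the second step, for Proposition \ref{p1} in even order, and again in the third, for the variational characterization of the smallest H-eigenvalue.
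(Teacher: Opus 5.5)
Your proposal is correct and follows essentially the same route as the paper. The paper's proof is exactly this chain: Theorem \ref{p3} gives a $B_{\pi}^R$-tensor, the even-order symmetric case of Proposition \ref{p1} makes it a $P$-tensor, and the cited facts from \cite{sq2015} and \cite{q2005} yield positive definiteness and positive H-eigenvalues. Your added sketch, the variational characterization of the smallest H-eigenvalue combined with the $P$-property, is a valid unpacking of the step the paper leaves to citation.
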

\begin{proof}
By Theorem \ref{p3}, the condition $S<1$ guarantees that $\mathcal{A}$ is a $B_{\pi}^R$-tensor for some positive $\pi$. Since $m$ is even and $\mathcal{A}$ is symmetric, Proposition \ref{p1} implies that $\mathcal{A}$ is a $P$-tensor. Moreover, $\mathcal{A}$ is positive definite, and all H-eigenvalues of $\mathcal{A}$ are positive.
\end{proof}

Next, we investigate the convex combination property of $B_{\pi}^{R}$-tensors, which plays a crucial role in analyzing the closure properties of structured tensor classes under linear operations.

\begin{theorem}\label{th3}
Suppose nonnegative vectors $\pi,\alpha\in\mathbb{R}^n$ both satisfy condition $\eqref{sumpi}$, and let $\mathcal{A},\mathcal{B}\in T_{m,n}$ be a $B_{\pi}^R$-tensor and a $B_{\alpha}^R$-tensor, respectively, with $R=R(\mathcal{A})$ and $R_i(\mathcal{A})=R_i(\mathcal{B})$ for all $i\in[n]$. Let $s,t\geq0$ with $s+t>0$, and $\pi'=(\pi'_1,\pi'_2,\cdots,\pi'_n)\in\mathbb{R}^n$ by $\pi'_i\geq\max\{\pi_i,\alpha_i\}$ for all $i\in[n]$. If $\pi'$ satisfies condition \eqref{sumpi}, then $s\mathcal{A}+t\mathcal{B}$ is a $B_{\pi'}^{R}$-tensor with $R(s\mathcal{A}+t\mathcal{B})=(s+t)R$.
\end{theorem}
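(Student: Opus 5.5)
We argue directly from Definition \ref{def-bt}, checking the row-sum condition (a) and the off-diagonal condition (b) for $\mathcal{C}:=s\mathcal{A}+t\mathcal{B}$. By linearity of row sums, $R_i(\mathcal{C})=sR_i(\mathcal{A})+tR_i(\mathcal{B})=(s+t)R_i(\mathcal{A})$. This identity is exactly where the hypothesis $R_i(\mathcal{A})=R_i(\mathcal{B})$ enters. Since $R_i(\mathcal{A})>0$ and $s+t>0$, condition (a) holds, and we also get $R(\mathcal{C})=(s+t)R$.

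For condition (b), first compare $\pi'_J$ with $\pi_J$ and $\alpha_J$. Because $\pi'_i\geq\max\{\pi_i,\alpha_i\}\geq0$ for every $i$, the product form gives $\pi'_J=\pi'_{i_2}\cdots\pi'_{i_m}\geq\pi_{i_2}\cdots\pi_{i_m}=\pi_J$, and similarly $\pi'_J\geq\alpha_J$. This uses the nonnegativity of all factors, and it is the one place where the product structure of $\pi_J$ is needed.

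Next fix $i$ and $J\neq(i,\dots,i)$. By hypothesis $a_{iJ}<\pi_JR_i(\mathcal{A})\leq\pi'_JR_i(\mathcal{A})$ and $b_{iJ}<\alpha_JR_i(\mathcal{B})=\alpha_JR_i(\mathcal{A})\leq\pi'_JR_i(\mathcal{A})$, where we used $R_i(\mathcal{A})>0$. Multiplying by $s,t\geq0$ and adding gives
$$c_{iJ}=sa_{iJ}+tb_{iJ}\leq(s+t)\pi'_JR_i(\mathcal{A})=\pi'_JR_i(\mathcal{C}).$$

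The main obstacle is keeping this inequality strict when one of $s,t$ vanishes. We handle it by cases. If $s>0$, the term $sa_{iJ}<s\pi'_JR_i(\mathcal{A})$ is strict, so the sum is strict. If $s=0$, then $t>0$ and the strictness comes from the $\mathcal{B}$ term instead. In either case $c_{iJ}<\pi'_JR_i(\mathcal{C})$.

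Finally, $\pi'$ satisfies \eqref{sumpi} by assumption. So $\mathcal{C}$ is a $B_{\pi'}^R$-tensor with row sum vector $(s+t)R$.
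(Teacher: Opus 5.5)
Your proposal is correct and follows essentially the same route as the paper. Both arguments use linearity of row sums to get $R_i(s\mathcal{A}+t\mathcal{B})=(s+t)R_i(\mathcal{A})>0$, the product form with nonnegative factors to get $\pi'_J\geq\max\{\pi_J,\alpha_J\}$, and then add the two defining inequalities to obtain condition (b); your explicit case split on whether $s>0$, which keeps the inequality strict, is a detail the paper's chain of inequalities leaves implicit.
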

\begin{proof}
For each $i\in[n]$,
\begin{align*}
R_i(s\mathcal{A}+t\mathcal{B})=&\sum\limits_{J\in[n]^{m-1}}(sa_{iJ}+tb_{iJ})\\
=&(s\sum\limits_{J\in[n]^{m-1}}a_{iJ}+t\sum\limits_{J\in[n]^{m-1}}b_{iJ})\\
=&sR_i(\mathcal{A})+tR_i(\mathcal{B})\\
=&(s+t)R_i(\mathcal{A})>0.
\end{align*}
Since $\pi'_i\geq\max\{\pi_i,\alpha_i\}\geq0$ for all $i\in[n]$, for any $J=(i_2,\cdots,i_m)\in[n]^{m-1}\setminus\{(i,\cdots,i)\}$ we obtain
\begin{align*}
\pi'_{J}=&\prod\limits_{k=2}^m\pi'_{i_k}\\
\geq&\prod\limits_{k=2}^m\max\{\pi_{i_k},\alpha_{i_k}\}\\
\geq&\max\{\prod\limits_{k=2}^m\pi_{i_k},\prod\limits_{k=2}^m\alpha_{i_k}\}\\
=&\max\{\pi_{J},\alpha_{J}\}.
\end{align*}
Therefore,
\begin{align*}
\pi'_{J}(s+t)R_i(\mathcal{A})\geq&\max\{\pi_{J},\alpha_{J}\}(s+t)R_i(\mathcal{A})\\
\geq&s\pi_{J}R_i(\mathcal{A})+t\alpha_{J}R_i(\mathcal{B})\\
>&sa_{iJ}+tb_{iJ}.
\end{align*}
In conclusion, $s\mathcal{A}+t\mathcal{B}$ a $B_{\pi'}^{R}$-tensor with $R(s\mathcal{A}+t\mathcal{B})=(s+t)R$ when $\pi'$ satisfies condition \eqref{sumpi}.
\end{proof}

\begin{corollary}\label{c3.3}
If $\alpha=\pi$ in Theorem \ref{th3}, then $s\mathcal{A}+t\mathcal{B}$ is a $B_{\pi}^{R}$-tensor with $R(s\mathcal{A}+t\mathcal{B})=(s+t)R$.
\end{corollary}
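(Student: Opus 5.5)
This follows by specializing Theorem \ref{th3} to $\alpha=\pi$ and $\pi'=\pi$. The only thing to check is that this choice satisfies the hypotheses of that theorem, which is immediate.

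First, take $\alpha=\pi$. The hypothesis that $\mathcal{B}$ is a $B_{\alpha}^R$-tensor then says that $\mathcal{B}$ is a $B_{\pi}^R$-tensor, and $\alpha$ satisfies condition \eqref{sumpi} because $\pi$ does. All remaining assumptions of Theorem \ref{th3} are unchanged: $R_i(\mathcal{A})=R_i(\mathcal{B})$ for all $i\in[n]$, and $s,t\geq 0$ with $s+t>0$.

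Second, choose $\pi'=\pi$. Since $\max\{\pi_i,\alpha_i\}=\max\{\pi_i,\pi_i\}=\pi_i$, the requirement $\pi'_i\geq\max\{\pi_i,\alpha_i\}$ holds with equality for every $i\in[n]$. Moreover, $\pi'$ satisfies condition \eqref{sumpi}, again because $\pi$ does.

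Theorem \ref{th3} therefore applies and shows that $s\mathcal{A}+t\mathcal{B}$ is a $B_{\pi}^R$-tensor with $R(s\mathcal{A}+t\mathcal{B})=(s+t)R$. I do not expect any step to be an obstacle. The one point to state explicitly is that the sum constraint on $\pi'$, which Theorem \ref{th3} imposes as an extra hypothesis, is automatic here because $\pi'=\pi$.

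As a sanity check, the direct argument also goes through. The row sums satisfy $R_i(s\mathcal{A}+t\mathcal{B})=(s+t)R_i(\mathcal{A})>0$. For each off-diagonal $J$, we have $s a_{iJ}+t b_{iJ}<(s+t)\pi_J R_i(\mathcal{A})$. This strict inequality holds because $s+t>0$ and both $a_{iJ}<\pi_JR_i(\mathcal{A})$ and $b_{iJ}<\pi_JR_i(\mathcal{B})=\pi_JR_i(\mathcal{A})$. Even when one of $s,t$ is zero, the other is positive, so strictness is preserved.
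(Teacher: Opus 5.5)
Your proposal is correct and is exactly the intended argument: the paper states this corollary without a separate proof, as the specialization of Theorem \ref{th3} with $\alpha=\pi$ and $\pi'=\pi$. In that case the requirement $\pi'_i\geq\max\{\pi_i,\alpha_i\}$ and the sum constraint on $\pi'$ hold automatically, and your direct check, including the remark that $s+t>0$ keeps the inequality strict, is also sound.
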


\begin{theorem}\label{th4}
Let $\pi\in\mathbb{R}^n$ be a positive vector satisfying condition $\eqref{sumpi}$, and let $\mathcal{A}\in T_{m,n}$ be a $B_{\pi}^R$-tensor with $R=R(\mathcal{A})$. Increasing the diagonal entries of $\mathcal{A}$ yields a tensor $\mathcal{B}$ that is a $B_{\pi}^{Q}$, where $Q=R(\mathcal{B})$.
\end{theorem}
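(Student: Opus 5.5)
Let $\mathcal{B}=\mathcal{A}+\mathcal{D}$, where $\mathcal{D}$ is a diagonal tensor with entries $d_{ii\cdots i}=d_i\geq 0$. The plan is to check the two conditions of Definition \ref{def-bt} for $\mathcal{B}$ directly, using the same vector $\pi$. The argument is short because off-diagonal entries are unchanged while row sums can only grow.

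\textbf{Row sums.} For each $i\in[n]$ we have $Q_i:=R_i(\mathcal{B})=R_i(\mathcal{A})+d_i\geq R_i(\mathcal{A})>0$. So condition (a) holds for $\mathcal{B}$.

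\textbf{Off-diagonal entries.} Fix $i$ and $J\in[n]^{m-1}\setminus\{(i,\cdots,i)\}$. Then $b_{iJ}=a_{iJ}$. Since $\pi$ is positive, $\pi_J=\pi_{i_2}\cdots\pi_{i_m}>0$. Using $Q_i\geq R_i(\mathcal{A})$, this gives
$$\pi_JQ_i\geq \pi_JR_i(\mathcal{A})>a_{iJ}=b_{iJ}.$$
This is condition (b) for $\mathcal{B}$ with $Q=R(\mathcal{B})$. The sum constraint \eqref{sumpi} concerns only $\pi$, which is unchanged, so $\mathcal{B}$ is a $B_{\pi}^{Q}$-tensor.

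\textbf{Where positivity of $\pi$ is used.} There is no real obstacle here. The only point needing care is the monotonicity step $\pi_JQ_i\geq\pi_JR_i(\mathcal{A})$, which requires $\pi_J\geq0$. Strictly, nonnegativity of $\pi$ already suffices. The positivity hypothesis in the statement makes the inequality transparent, and it is consistent with Proposition \ref{th2}(b): any entry with $\pi_J=0$ is already negative and remains so, since it is not changed.

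Equivalently, one could write $\mathcal{B}$ with $Q_i\geq R_i$ and invoke the characterization of Proposition \ref{p2}. In that form, increasing $R_i$ decreases $M_{iJ}=a_{iJ}/R_i$ whenever $a_{iJ}>0$, and keeps it nonpositive whenever $a_{iJ}\leq 0$. Hence $M_J(\mathcal{B})\leq\max\{M_J(\mathcal{A}),\,\text{nonpositive terms}\}<\pi_J$. I would present the direct verification above as the main proof.
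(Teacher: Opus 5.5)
Your proof is correct and matches the paper's argument: write $\mathcal{B}=\mathcal{A}+\mathcal{D}$ with $\mathcal{D}$ diagonal and $d_i\geq 0$, so that $R_i(\mathcal{B})=R_i(\mathcal{A})+d_i>0$ and $b_{iJ}=a_{iJ}<\pi_J R_i(\mathcal{A})\leq \pi_J R_i(\mathcal{B})$. Your side remark that nonnegativity of $\pi$ already suffices for the monotonicity step is also accurate.
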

\begin{proof}
By the definition of $\mathcal{B}$, there exists a diagonal tensor $\mathcal{D}=\mbox{diag}(d_{1},d_{2},\cdots,d_{n})$ with $d_{i}\geq0$ for all $i\in[n]$ such that $\mathcal{B}=\mathcal{A}+\mathcal{D}$. Then for every $i\in[n]$,
$$R_i(\mathcal{B})=R_i(\mathcal{A})+d_i>0,$$
and for all $J\in[n]^{m-1}\setminus\{(i,\cdots,i)\}$,
$$b_{iJ}=a_{iJ}.$$
Consequently, 
$$b_{iJ}<\pi_{J}R_i(\mathcal{A})\leq\pi_{J}R_i(\mathcal{B}).$$
Hence $\mathcal{B}$ is a $B_{\pi}^{Q}$-tensor.
\end{proof}

Lastly, we examine the invariance property of $B_{\pi}^{R}$-tensors with respect to index permutations

\begin{proposition}\label{thp}
Let nonnegative vector $\pi\in\mathbb{R}^n$ satisfy condition \eqref{sumpi}, and $\sigma$ be a permutation of $[m]$ satisfying $\sigma(1) = 1$.
If $\mathcal{A}= (a_{i_1i_2\cdots i_m})\in T_{m,n}$ is a $B_{\pi}^R$-tensor, then $\mathcal{A}^\sigma$ is also a $B_{\pi}^R$-tensor, where $\mathcal{A}^\sigma$ is defined by
$$a^\sigma_{i_1i_2\cdots i_m} = a_{i_{\sigma(1)}i_{\sigma(2)}\cdots i_{\sigma(m)}}.$$
\end{proposition}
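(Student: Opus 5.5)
Since $\sigma(1)=1$, the permutation $\sigma$ only rearranges the last $m-1$ indices. So the plan is to show that each row of $\mathcal{A}^\sigma$ is a rearrangement of the same row of $\mathcal{A}$, and that the product-form weight $\pi_J$ is unchanged under this rearrangement. Concretely, fix $i\in[n]$ and, for $J=(i_2,\cdots,i_m)\in[n]^{m-1}$, set $J^\sigma:=(i_{\sigma(2)},\cdots,i_{\sigma(m)})$ with $i_1=i$. Then by definition
$$a^\sigma_{iJ}=a_{iJ^\sigma}.$$
Because $\sigma$ restricted to $\{2,\cdots,m\}$ is a bijection of $\{2,\cdots,m\}$, the map $J\mapsto J^\sigma$ is a bijection of $[n]^{m-1}$ onto itself.

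First, I verify condition (a) of Definition \ref{def-bt}. Summing over $J$ and reindexing through this bijection gives
$$R_i(\mathcal{A}^\sigma)=\sum_{J}a_{iJ^\sigma}=\sum_{K}a_{iK}=R_i(\mathcal{A})>0.$$
So the row sum vector is unchanged, $R(\mathcal{A}^\sigma)=R(\mathcal{A})=R$.

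Next, I verify condition (b). The key observations are these:
\begin{itemize}
\item $J^\sigma=(i,\cdots,i)$ if and only if $J=(i,\cdots,i)$, since all entries of $J$ equal $i$ exactly when all entries of $J^\sigma$ do.
\item $\pi_{J^\sigma}=\pi_{i_{\sigma(2)}}\cdots\pi_{i_{\sigma(m)}}=\pi_{i_2}\cdots\pi_{i_m}=\pi_J$, because the product form is invariant under reordering its factors.
\end{itemize}
Hence for every $J\neq(i,\cdots,i)$, applying the $B_\pi^R$ property of $\mathcal{A}$ at the multi-index $J^\sigma\neq(i,\cdots,i)$ yields
$$a^\sigma_{iJ}=a_{iJ^\sigma}<\pi_{J^\sigma}R_i(\mathcal{A})=\pi_JR_i(\mathcal{A}^\sigma).$$

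Finally, $\pi$ is the same vector, so it still satisfies \eqref{sumpi}, and $\mathcal{A}^\sigma$ is a $B_\pi^R$-tensor. There is no real obstacle in this argument. The only point needing care is to notice that the permutation invariance of $\pi_J$ depends on $\pi_J$ having product form, which is exactly the structure emphasized in Proposition \ref{p2}.
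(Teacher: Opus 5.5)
Your proof is correct and follows essentially the same route as the paper: the row sums are unchanged because $\sigma$ only reorders the last $m-1$ indices, the product-form weight satisfies $\pi_{J^\sigma}=\pi_J$, and condition (b) for $\mathcal{A}$ is then applied at the permuted multi-index. You are slightly more careful than the paper in stating explicitly that $J\mapsto J^\sigma$ is a bijection of $[n]^{m-1}$ and that $J^\sigma\neq(i,\cdots,i)$ exactly when $J\neq(i,\cdots,i)$; the paper uses both facts without stating them.
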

\begin{proof}
Since $\mathcal{A}$ is a $B_{\pi}^R$-tensor, then $R_i(\mathcal{A}) >0$ for all $i\in[n]$.

By the condition $\sigma(1) = 1$, the permutation only rearranges the order of the last $m-1$ indices in each entry of the $i$-th row. The set of entries in the $i$-th row of $\mathcal{A}^\sigma$ is identical to that of the $i$-th row of $\mathcal{A}$, with only the summation order changed. Since finite summation is commutative, we have
$$R_i(\mathcal{A}^\sigma) = \sum_{J \in [n]^{m-1}} a^\sigma_{iJ} = \sum_{J \in [n]^{m-1}} a_{iJ} = R_i(\mathcal{A}) > 0.$$
This confirms that condition $(a)$ of Definition \ref{def-bt} holds for $\mathcal{A}^\sigma$.

For any $i \in [n]$ and any $J\in[n]^{m-1}\setminus\{(i,\cdots,i)\}$, let $J' = \sigma(J)$ be the multi-index obtained by permuting $J$ according to $\sigma$. Then
$$a^\sigma_{iJ} = a_{iJ'}.$$
 Since $\sigma$ only permutes the last $m-1$ indices, we have
 $$\pi_J = \pi_{i_2} \pi_{i_3} \cdots \pi_{i_m} = \pi_{i_{\sigma(2)}} \pi_{i_{\sigma(3)}} \cdots \pi_{i_{\sigma(m)}} = \pi_{J'}.$$
 Because $\mathcal{A}$ is a $B_{\pi}^R$-tensor, it holds that
 $$\pi_J R_i(\mathcal{A}^\sigma)
 = \pi_{J'} R_i(\mathcal{A})
 > a_{iJ'}
 = a^\sigma_{iJ}.$$
Therefore $\mathcal{A}^\sigma$ satisfies condition $(b)$ of Definition \ref{def-bt} and is a $B_{\pi}^R$-tensor.
\end{proof}


\section{Interval $B_{\pi}^{R^I}$-Tensors}
\setcounter{section}{4}

Having established the fundamental properties of deterministic $B_{\pi}^{R}$-tensors, we now extend our analysis to the interval setting, which is motivated by the prevalence of uncertain data in real-world applications. First, we establish the relationship between interval $B_{\pi}^{R^{I}}$-tensors and interval $P$-tensors, which follows from the corresponding result for deterministic tensors.

\begin{corollary}\label{c4.1}
Let $\mathcal{A}^I$ be an interval $B_{\pi}^{R^I}$-tensor in $T_{m,n}$ with some $\pi\geq0$. Then $\mathcal{A}^I$ is an interval $P$-tensor, provided that either $m$ is odd, or $m$ is even and $\mathcal{A}^I$ is symmetric.
\end{corollary}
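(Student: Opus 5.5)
The plan is to reduce everything to the deterministic result, Proposition \ref{p1}, applied to suitable members of $\mathcal{A}^I$. I will treat the two cases of the statement separately, because in the even-order case the symmetry hypothesis is placed on the interval tensor and not on its members.

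\emph{Odd $m$.} This case is immediate. By definition of an interval $B_{\pi}^{R^I}$-tensor, every $\mathcal{A}\in\mathcal{A}^I$ is a $B_{\pi}^R$-tensor with $R=R(\mathcal{A})$. Proposition \ref{p1} (odd order) then says each such $\mathcal{A}$ is a $P$-tensor. Hence $\mathcal{A}^I$ is an interval $P$-tensor by definition.

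\emph{Even $m$, $\mathcal{A}^I$ symmetric.} Here Proposition \ref{p1} cannot be applied to an arbitrary $\mathcal{A}\in\mathcal{A}^I$, since, as remarked after the definition of interval tensors, such an $\mathcal{A}$ need not be symmetric. Instead I will invoke Lemma \ref{pip}, so it suffices to show that $\mathcal{A}^z$ is a $P$-tensor for every $z\in Y$. The steps are as follows.
\begin{enumerate}
\item Compute the entries of $\mathcal{A}^z$. Since $T_z$ is diagonal,
$$a^z_{i_1\cdots i_m}=a^c_{i_1\cdots i_m}-\delta_{i_1\cdots i_m}z_{i_1}\cdots z_{i_m}.$$
\item Because $|z_{i_1}\cdots z_{i_m}|=1$, each entry equals $a^c\pm\delta$. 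Hence $\underline{\mathcal{A}}\le\mathcal{A}^z\le\overline{\mathcal{A}}$, that is, $\mathcal{A}^z\in\mathcal{A}^I$, and so $\mathcal{A}^z$ is a $B_{\pi}^R$-tensor.
\item Since $\mathcal{A}^c$ and $\Delta$ are symmetric, and the product $z_{i_1}\cdots z_{i_m}$ is invariant under any permutation of the indices, $\mathcal{A}^z$ is symmetric.
\item Proposition \ref{p1} (even order, symmetric) now gives that $\mathcal{A}^z$ is a $P$-tensor for every $z\in Y$.
\item Lemma \ref{pip} then yields that $\mathcal{A}^I$ is an interval $P$-tensor.
\end{enumerate}

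The only point needing care is the even-order case: symmetry of the interval tensor does not pass to all of its members. The route through the vertex-type tensors $\mathcal{A}^z$ and Lemma \ref{pip} is designed to get around exactly this, and verifying that each $\mathcal{A}^z$ is both symmetric and a member of $\mathcal{A}^I$ (steps 2 and 3) is the crux.
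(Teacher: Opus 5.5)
Your proposal is correct and follows the paper's proof. The odd case is immediate from Proposition \ref{p1}. The even symmetric case goes through the tensors $\mathcal{A}^z$, which lie in $\mathcal{A}^I$ and are symmetric, followed by Lemma \ref{pip}; your entrywise computation $a^z_{i_1\cdots i_m}=a^c_{i_1\cdots i_m}-\delta_{i_1\cdots i_m}z_{i_1}\cdots z_{i_m}$ just makes explicit what the paper states as ``by the definition of $\mathcal{A}^z$''.
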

\begin{proof}
For odd $m$, the conclusion follows immediately from Proposition \ref{p1} and the definition of an interval $P$-tensor. It therefore suffices to consider the case where $m$ is even.

Assume that $\mathcal{A}^I$ is an even-order symmetric interval $B_{\pi}^{R^I}$-tensor, then for every $z\in Y$, the tensor $\mathcal{A}^z$ belongs to $\mathcal{A}^I$, and it is an even-order symmetric $B_{\pi}^{R}$-tensor by the definition of $\mathcal{A}^z$. Hence each $\mathcal{A}^z\in\mathcal{A}^I$ is a $P$-tensor by Proposition \ref{p1}.
According to Lemma \ref{pip}, $\mathcal{A}^I$ is an interval $P$-tensor.
\end{proof}

Next, we derive several fundamental properties of interval $B_{\pi}^{R^{I}}$-tensors that reveal the inherent relationships between their lower and upper endpoint tensors.
\begin{proposition}\label{p4.1}
Let $\mathcal{A}^I$ be an interval $B_{\pi}^{R^I}$-tensor in $T_{m,n}$ with some $\pi\geq0$. Then for $i\in[n]$,
\begin{itemize}
  \item [(a)] $\underline{a}_{ii\cdots i}>\max\{\pi_i^{m-1}R_i(\underline{\mathcal{A}}),\pi_i^{m-1}(\underline{a}_{ii\cdots i}+\sum\limits_{\substack{J\in[n]^{m-1}\\
      J\neq(i,\cdots,i)}}\overline{a}_{iJ})\}$;
  \item [(b)] $\underline{a}_{ii\cdots i}>\overline{a}_{iJ}$ when $\pi_i^{m-1}>\pi_J$, where $J\in[n]^{m-1}\setminus\{(i,\cdots,i)\}$;
  \item [(c)] $\underline{a}_{kk\cdots k}>\overline{a}_{kJ}$ when $k=\mbox{arg}\max\limits_{j\in[n]}\{\pi_j\}$, where $J\in[n]^{m-1}\setminus\{(k,\cdots,k)\}$;
  \item [(d)] $\overline{a}_{iJ}<0$ when $\pi_{J}=0$, where $J\in[n]^{m-1}\setminus\{(i,\cdots,i)\}$.
\end{itemize}
\end{proposition}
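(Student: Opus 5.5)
The idea is to test the defining inequalities of a $B_{\pi}^R$-tensor on well-chosen members of $\mathcal{A}^I$. Every $\mathcal{A}\in\mathcal{A}^I$ is a $B_{\pi}^R$-tensor, so Proposition \ref{p23}(a) and (b) and Proposition \ref{th2}(b) hold for each such $\mathcal{A}$. The diagonal entry $a_{ii\cdots i}$ appears only in row $i$, so I can choose it independently of the off-diagonal entries of that row. The strategy is to take the diagonal entry at its lower endpoint $\underline{a}_{ii\cdots i}$ and push the relevant off-diagonal entries to their upper endpoints.

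For (a), first take $\mathcal{A}=\underline{\mathcal{A}}$. Proposition \ref{p23}(a), with $\pi_{i\cdots i}=\pi_i^{m-1}$, gives $\underline{a}_{ii\cdots i}>\pi_i^{m-1}R_i(\underline{\mathcal{A}})$. Next take the tensor $\mathcal{A}$ whose $i$th row has diagonal entry $\underline{a}_{ii\cdots i}$ and all off-diagonal entries $\overline{a}_{iJ}$; the other rows can be arbitrary, say $\underline{\mathcal{A}}$. This tensor lies in $\mathcal{A}^I$, and its $i$th row sum is exactly $\underline{a}_{ii\cdots i}+\sum_{J\neq(i,\cdots,i)}\overline{a}_{iJ}$. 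Applying Proposition \ref{p23}(a) to it gives the second term of the maximum. Since both strict inequalities hold, the bound by the maximum follows.

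For (b), choose $\mathcal{A}\in\mathcal{A}^I$ with $a_{ii\cdots i}=\underline{a}_{ii\cdots i}$ and $a_{iJ}=\overline{a}_{iJ}$ for the fixed $J$. Proposition \ref{p23}(b) requires $\pi_{i\cdots i}\ge\pi_J$, which is implied by the hypothesis $\pi_i^{m-1}>\pi_J$, so $\underline{a}_{ii\cdots i}>\overline{a}_{iJ}$. If one prefers to avoid citing Proposition \ref{p23}, argue directly. The row sum $R_i(\mathcal{A})$ is positive, and since $\mathcal{A}$ is a $B_{\pi}^R$-tensor, Proposition \ref{p23}(a) and Definition \ref{def-bt}(b) give
\[
a_{ii\cdots i}>\pi_i^{m-1}R_i(\mathcal{A})\ge\pi_JR_i(\mathcal{A})>a_{iJ}.
\]

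For (c), let $k$ be the index maximizing $\pi_j$. Then $\pi_k^{m-1}\ge\pi_{j_2}\cdots\pi_{j_m}=\pi_J$ for every $J$, so the same tensor choice together with Proposition \ref{p23}(b) (equivalently Proposition \ref{th2}(a)) gives the claim. Note that equality $\pi_k^{m-1}=\pi_J$ is allowed here because Proposition \ref{p23}(b) only needs $\ge$.

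For (d), pick $\mathcal{A}\in\mathcal{A}^I$ with $a_{iJ}=\overline{a}_{iJ}$ and apply Proposition \ref{th2}(b) to obtain $\overline{a}_{iJ}<0$.

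The only delicate point is checking that each constructed tensor genuinely belongs to $\mathcal{A}^I$ and that its row sum is the claimed one. Both hold because entries are constrained independently within their intervals.
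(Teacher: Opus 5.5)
Your proof is correct and follows the paper's argument. Both test Proposition \ref{p23} and Proposition \ref{th2}(b) on extremal members of $\mathcal{A}^I$: diagonal entry at $\underline{a}_{ii\cdots i}$ and off-diagonal entries at $\overline{a}_{iJ}$, or $\overline{\mathcal{A}}$ itself for (d). Your handling of (c) via Proposition \ref{p23}(b), which needs only $\pi_k^{m-1}\ge\pi_J$, is slightly more careful than the paper's ``direct consequence of (b)''. Part (b) as stated requires the strict inequality $\pi_i^{m-1}>\pi_J$, which can fail for every $J$, for instance when all $\pi_j$ are equal.
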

\begin{proof}
$(a)$ Since $\underline{A}\in\mathcal{A}^I$ is a $B_{\pi}^{R}$-tensor. By $(a)$ of Proposition \ref{p23}, we can obtain that
$$\underline{a}_{ii\cdots i}>\pi_i^{m-1}R_i(\underline{A})$$
for all $i\in [n]$.

Define $\mathcal{A}^{(1)}=(a_{i_1i_2\cdots i_m}^{(1)})\in T_{m,n}$ by
$$a^{(1)}_{i_1i_2\cdots i_m}=\begin{cases}
\underline{a}_{i_1i_1\cdots i_1},\mbox{ if }(i_2,i_3,\cdots,i_m)= (i_1,\cdots,i_1),\\
\overline{a}_{i_1i_2\cdots i_m},\mbox{ otherwise}.
\end{cases}$$
Obviously $\mathcal{A}^{(1)}\in\mathcal{A}^I$, then $\mathcal{A}^{(1)}$ is also a $B_{\pi}^R$-tensor. By $(a)$ of Proposition \ref{p23}, we can obtain that
$$\underline{a}_{ii\cdots i}>\pi_i^{m-1}R_i(\mathcal{A}^{(1)})=\pi_i^{m-1}(\underline{a}_{ii\cdots i}+\sum\limits_{\substack{J\in[n]^{m-1}\\
      J\neq(i,\cdots,i)}}\overline{a}_{iJ})$$
for all $i\in [n]$.

$(b)$ Suppose $\pi_i^{m-1}>\pi_J$ for $i\in[n]$ and $J\in[n]^{m-1}\setminus\{(i,\cdots,i)\}$. Consider the tensor $\mathcal{A}^{(1)}$ again, and according to $(a)$ of Proposition \ref{p23}, we can obtain that
$$\underline{a}_{ii\cdots i}>\overline{a}_{iJ}.$$

$(c)$ This is a direct consequence of $(b)$.

$(d)$ As $\overline{A}\in\overline{A}^I$ is a $B_{\pi}^R$-tensor. If $\pi_{J}=0$, where $J\in[n]^{m-1}\setminus\{(i,\cdots,i)\}$ and $i\in[n]$, by $(b)$ of Proposition \ref{th2}, we can obtain that
$$\overline{a}_{iJ}<0.$$
\end{proof}

Analogous to the deterministic case, interval $B_{\pi}^{R^{I}}$-tensors also exhibit monotonicity with respect to the nonnegative vector $\pi$.

\begin{proposition}\label{p27}
Let $\mathcal{A}^I$ be an interval $B_{\pi}^{R^I}$-tensor in $T_{m,n}$, where nonnegative vector $\pi\in\mathbb{R}^n$ satisfies condition \eqref{sumpi}. If $\alpha\in\mathbb{R}^n$ satisfies condition \eqref{sumpi} and $\alpha\geq\pi$, then $\mathcal{A}^I$ is also an interval $B_{\alpha}^{R^I}$-tensor.
\end{proposition}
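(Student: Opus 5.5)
The plan is to reduce the statement to the deterministic monotonicity result, Proposition \ref{p24}, applied tensor by tensor. By definition, $\mathcal{A}^I$ being an interval $B_{\pi}^{R^I}$-tensor means that every $\mathcal{A}\in\mathcal{A}^I$ is a $B_{\pi}^R$-tensor with $R=R(\mathcal{A})\in R^I$. We must show the same holds with $\pi$ replaced by $\alpha$.

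First, fix an arbitrary $\mathcal{A}\in\mathcal{A}^I$. By hypothesis it is a $B_{\pi}^R$-tensor, where $\pi\geq 0$ satisfies \eqref{sumpi}. Since $\alpha$ also satisfies \eqref{sumpi} and $\alpha\geq\pi$, Proposition \ref{p24} gives that $\mathcal{A}$ is a $B_{\alpha}^R$-tensor. The row sum vector is still $R(\mathcal{A})\in R^I$, because the row sums depend only on $\mathcal{A}$ and not on the weight vector. As $\mathcal{A}$ was arbitrary, $\mathcal{A}^I$ is an interval $B_{\alpha}^{R^I}$-tensor.

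For completeness, one can also check the key inequality directly rather than only citing Proposition \ref{p24}. Since $\alpha_j\geq\pi_j\geq 0$, $\alpha$ is nonnegative. Taking products of nonnegative numbers then gives $\alpha_J=\alpha_{i_2}\cdots\alpha_{i_m}\geq\pi_{i_2}\cdots\pi_{i_m}=\pi_J$ for every $J$. Because $R_i(\mathcal{A})>0$, we obtain
$$\alpha_J R_i(\mathcal{A})\geq \pi_J R_i(\mathcal{A})>a_{iJ}$$
for every $J\neq(i,\cdots,i)$. Condition (a) of Definition \ref{def-bt} is unchanged.

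There is no real obstacle here. The only point needing care is that $\alpha$ is nonnegative, so that the product-form comparison is legitimate; this follows from $\alpha\geq\pi\geq0$.
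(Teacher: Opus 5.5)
Your proof is correct and is the paper's argument: apply Proposition~\ref{p24} to each $\mathcal{A}\in\mathcal{A}^I$ individually. Your direct check is also valid: $\alpha\geq\pi\geq 0$ gives $\alpha_J\geq\pi_J$, hence $\alpha_J R_i(\mathcal{A})\geq\pi_J R_i(\mathcal{A})>a_{iJ}$, which proves the content of that proposition without citing it.
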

\begin{proof}
As $\mathcal{A}^I$ is an interval $B_{\pi}^{R^I}$-tensor, all $\mathcal{A}\in\mathcal{A}^I$ are $B_{\pi}^{R}$-tensors.
If $\alpha\geq\pi$, from Proposition \ref{p24}, all $\mathcal{A}\in\mathcal{A}^I$ are $B_{\alpha}^{R}$-tensors.
Hence, $\mathcal{A}^I$ is an interval $B_{\alpha}^{R^I}$-tensor.
\end{proof}

To provide practical verifiable criteria for interval $B_{\pi}^{R^{I}}$-tensors, we establish an equivalent characterization based solely on inequalities involving the entries of the lower and upper endpoint tensors. It allows us to check the interval $B_{\pi}^{R^{I}}$ property without enumerating all tensors in the interval.

\begin{theorem}\label{th1}
Let $\mathcal{A}^I$ be an interval tensor in $T_{m,n}$, and let nonnegative vector $\pi\in\mathbb{R}^n$ satisfy condition \eqref{sumpi}. Then $\mathcal{A}^I$ is an interval $B_{\pi}^{R^I}$-tensor if and only if
\begin{itemize}
  \item [(a)] $R_i(\underline{\mathcal{A}})>0$ for all $i\in[n]$;
  \item [(b)] for every $i\in[n]$ and every $J\in[n]^{m-1}\setminus\{(i,\cdots,i)\}$,
  \begin{itemize}
    \item [(b$_1$)] $\underline{a}_{ii\cdots i}+\sum\limits_{\substack{K\in[n]^{m-1}, K\neq J\\ K\neq(i,\cdots,i)}}\underline{a}_{iK}>(\frac{1}{\pi_J}-1)\overline{a}_{iJ}$, when $0<\pi_J\leq1$;
    \item [(b$_2$)] $0>\overline{a}_{iJ}$, when $\pi_J=0$.
  \end{itemize}
\end{itemize}
\end{theorem}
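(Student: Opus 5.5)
The plan is to reduce the interval property to finitely many inequalities by rewriting condition (b) of Definition \ref{def-bt} so that, for each fixed pair $(i,J)$, it becomes a statement about a single linear function of the entries of row $i$. That function will be monotone in each entry, so its worst case over the box $\mathcal{A}^I$ is attained at an explicit endpoint tensor. The necessity and sufficiency directions will then both follow by checking that one worst-case tensor.

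For $\mathcal{A}\in\mathcal{A}^I$, fix $i$ and $J\neq(i,\cdots,i)$. Condition (b), $\pi_J R_i(\mathcal{A})>a_{iJ}$, expands to
$$\pi_J\Big(a_{ii\cdots i}+\sum_{K\neq J,(i,\cdots,i)}a_{iK}\Big)>(1-\pi_J)a_{iJ}.$$
We split into two cases according to $\pi_J$.
\begin{itemize}
\item If $0<\pi_J\le 1$, dividing by $\pi_J$ gives $a_{ii\cdots i}+\sum_{K}a_{iK}>(\frac1{\pi_J}-1)a_{iJ}$. The left side is increasing in every entry other than $a_{iJ}$. The right side is nondecreasing in $a_{iJ}$, since $\frac1{\pi_J}-1\ge0$. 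The worst case is therefore the tensor $\mathcal{A}^{(i,J)}$ that takes lower endpoints in row $i$ except $\overline{a}_{iJ}$ at position $J$, which gives (b$_1$).
\item If $\pi_J=0$, the condition reads $a_{iJ}<0$, and its worst case is $\overline{a}_{iJ}<0$, which gives (b$_2$).
\end{itemize}
Condition (a), $R_i(\mathcal{A})>0$, is increasing in all entries, so its worst case is $\underline{\mathcal{A}}$.

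For necessity, suppose $\mathcal{A}^I$ is an interval $B_\pi^{R^I}$-tensor. Then $\underline{\mathcal{A}}$ is a $B_\pi^R$-tensor, which gives (a). For (b$_1$), fix $(i,J)$ and build $\mathcal{A}^{(i,J)}$ with entries in row $i$ as above and arbitrary entries, say lower endpoints, elsewhere. This tensor lies in $\mathcal{A}^I$, so condition (b) holds for it at $(i,J)$, and the rearrangement gives (b$_1$). For (b$_2$), apply Proposition \ref{th2}(b) to $\overline{\mathcal{A}}$, exactly as in Proposition \ref{p4.1}(d).

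For sufficiency, take any $\mathcal{A}\in\mathcal{A}^I$. Then $R_i(\mathcal{A})\ge R_i(\underline{\mathcal{A}})>0$. For $0<\pi_J\le1$ we chain
$$a_{ii\cdots i}+\sum_{K}a_{iK}\ge \underline{a}_{ii\cdots i}+\sum_K\underline{a}_{iK}>(\tfrac1{\pi_J}-1)\overline{a}_{iJ}\ge(\tfrac1{\pi_J}-1)a_{iJ},$$
then multiply by $\pi_J>0$ and add $\pi_J a_{iJ}$ to recover $\pi_J R_i(\mathcal{A})>a_{iJ}$. For $\pi_J=0$ we have $a_{iJ}\le\overline{a}_{iJ}<0=\pi_JR_i(\mathcal{A})$. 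Note that $\pi_J\le1$ always holds by \eqref{sumpi}, since every $\pi_J$ is a nonnegative term of a sum that is at most $1$. So the two subcases exhaust all $J$.

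The only delicate point is the sign of the coefficient $\frac1{\pi_J}-1$, which is what makes the upper endpoint the worst case for $a_{iJ}$. It is nonnegative precisely because of the sum constraint, and I will state that explicitly. Beyond this, the argument is bookkeeping: I must make sure the constructed tensor $\mathcal{A}^{(i,J)}$ is a legitimate member of $\mathcal{A}^I$, and that rows other than $i$ play no role.
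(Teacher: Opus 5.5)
Your proposal is correct and follows essentially the same route as the paper. For necessity you use $\underline{\mathcal{A}}$, the extremal tensor with $\overline{a}_{iJ}$ at position $(i,J)$ and lower endpoints elsewhere, and $\overline{\mathcal{A}}$ when $\pi_J=0$; for sufficiency you use the same monotone chain relying on $\frac{1}{\pi_J}-1\geq 0$. Your explicit remark that $\pi_J\leq 1$ follows from \eqref{sumpi}, so the two subcases cover every $J$, is a point the paper leaves implicit.
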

\begin{proof} ``{\bf Necessity.}'' $(a)$ Because $\underline{\mathcal{A}}=(\underline{a}_{i_1i_2\cdots i_m})\in\mathcal{A}^I$, it is a $B_{\pi}^R$-tensor with $R=R(\underline{\mathcal{A}})$. Condition $(a)$ of Definition \ref{def-bt} immediately gives $R_i(\underline{\mathcal{A}})>0$ for all $i\in [n]$.

$(b)$ Let $i\in[n]$ and $J\in[n]^{m-1}\setminus\{(i,\cdots,i)\}$.

$(b_1)$  Case $0<\pi_J\leq1$: Define $\mathcal{A}^{(0)}=(a_{i_1i_2\cdots i_m}^{(0)})\in T_{m,n}$ by
$$a^{(0)}_{i_1i_2\cdots i_m}=\begin{cases}
\overline{a}_{i_1J},\mbox{ if }J\neq (i_1,\cdots,i_1) \mbox{ and  }(i_2,i_3,\cdots,i_m)=J ,\\
\underline{a}_{i_1i_2\cdots i_m},\mbox{ otherwise}.
\end{cases}$$
Clearly $\mathcal{A}^{(0)}\in\mathcal{A}^I$, so it is a $B_{\pi}^R$-tensor with $R=R({\mathcal{A}^{(0)}})$. Hence,
$$\pi_J(\underline{a}_{ii\cdots i}+\overline{a}_{iJ}+\sum\limits_{\substack{K\in[n]^{m-1}, K\neq J\\ K\neq(i,\cdots,i)}}\underline{a}_{iK})=\pi_JR_i(\underline{A}^{(0)})>a^{(0)}_iJ=\overline{a}_{iJ},$$
Which gives
$$\underline{a}_{ii\cdots i}+\sum\limits_{\substack{K\in[n]^{m-1}, K\neq J\\ K\neq(i,\cdots,i)}}\underline{a}_{iK}>(\frac{1}{\pi_J}-1)\overline{a}_{iJ}.$$

$(b_2)$ Case $\pi_J=0$: Now consider $\overline{\mathcal{A}}=(\overline{a}_{i_1i_2\cdots i_m})\in \mathcal{A}^I$. As it is a $B_{\pi}^R$-tensor with $R=R(\overline{\mathcal{A}})$,
$$0=\pi_JR_i(\overline{\mathcal{A}})>\overline{a}_{iJ},$$
so $(b_2)$ holds.

``{\bf Sufficiency.}'' Condition $(a)$ guarantees that for every $\mathcal{A}\in\mathcal{A}^I$ and every $i\in[n]$, $$R_i(\mathcal{A})\geq R_i(\underline{\mathcal{A}})>0,$$
i.e., part $(a)$ of Definition \ref{def-bt} holds.

Fix $i\in[n]$ and $J\in[n]^{m-1}\setminus\{(i,\cdots,i)\}$. For any $\mathcal{A}=(a_{i_1i_2\cdots i_m})\in\mathcal{A}^I$, we have
.$$\underline{a}_{iK}\leq a_{iK}\leq \overline{a}_{iK}$$
for all $K\in[n]^{m-1}$

If $0<\pi_J\leq1$, then $\frac{1}{\pi_J}-1\geq0$. Using $(b_1)$,
$$a_{ii\cdots i}+\sum\limits_{\substack{K\in[n]^{m-1}, K\neq J\\ K\neq(i,\cdots,i)}}a_{iK}\geq \underline{a}_{ii\cdots i}+\sum\limits_{\substack{K\in[n]^{m-1}, K\neq J\\ K\neq(i,\cdots,i)}}\underline{a}_{iK}>(\frac{1}{\pi_J}-1)\overline{a}_{iJ}\geq (\frac{1}{\pi_J}-1)a_{iJ},$$
and again we obtain $\pi_JR_i(\mathcal{A})>a_{iJ}$.

If $\pi_J=0$, $(b_2)$ gives $0>\overline{a}_{iJ}\geq a_{iJ}$, so
$$\pi_JR_i(\mathcal{A})=0> a_{iJ}.$$

Thus $(b)$ of Definition \ref{def-bt} holds for all $\mathcal{A}\in\mathcal{A}^I$, completing the proof.
\end{proof}

\begin{example}\label{e4.1}
Let $\pi=(0.4,0.6)$, $\mathcal{A}^I=[\underline{\mathcal{A}},\overline{\mathcal{A}}]$, where $\underline{a}_{111}=\underline{a}_{222}=\overline{a}_{111}=\overline{a}_{222}=2$, $\underline{a}_{122}=\underline{a}_{211}=\overline{a}_{122}=\overline{a}_{211}=0.1$, $\underline{a}_{112}=\underline{a}_{121}=\underline{a}_{221}=\underline{a}_{212}=0$ and $\overline{a}_{112}=\overline{a}_{121}=\overline{a}_{221}=\overline{a}_{212}=0.5$.
Then,
$$\sum\limits_{J\in[2]^2}\pi_J=\pi_1^2+2\pi_1\pi_2+\pi_2^2=1,$$
which implies that $\pi$ satisfies condition $\eqref{sumpi}$.
$$R_1(\underline{A})=2+0.1=2.1>0,\ \ R_2(\underline{A})=2+0.1=2.1>0,$$
then condition $(a)$ of Theorem \ref{th1} is satisfied.
For $J=(1,2)$, $\pi_J=0.4\times0.6=0.24\in(0,1]$, take $i=1$, then
$$\underline{a}_{111}+\sum\limits_{\substack{K\neq J\\ K\neq(1,1)}}\underline{a}_{1K}=2+0.1=2.1,$$
and
$$(\frac{1}{\pi_J}-1)\overline{a}_{112}=(\frac{1}{0.24}-1)\times 0.5<2.1,$$
satisfies condition ($b_1$) of Theorem \ref{th1}. Similarly, it can be verified that condition $(b)$ of Theorem \ref{th1} hold for all $i\in[2]$ and $J\in[2]^2$, therefore, by Theorem \ref{th1} $\mathcal{A}^I$ is an interval $B_{\pi}^{R^I}$-tensor.
\end{example}

The above theorem can be equivalently reformulated in terms of checking the $B_{\pi}^{R}$ property for a finite set of extremal tensors, which provides an alternative verification approach.

\begin{corollary}\label{c41}
Let $\mathcal{A}^I$ be an interval tensor in $T_{m,n}$ with $R_i(\underline{\mathcal{A}})>0$ for all $i\in[n]$, and let nonnegative vector $\pi\in\mathbb{R}^n$ satisfy condition \eqref{sumpi}. For each $J\in[n]^{m-1}$, define the tensor $\mathcal{A}^{(J)}=(a_{i_1i_2\cdots i_m}^{(J)})\in T_{m,n}$ as follows:
\begin{itemize}
  \item [(a)]  When $0<\pi_J\leq1$, let
   $$a^{(J)}_{i_1i_2\cdots i_m}=\begin{cases}
\overline{a}_{i_1J},\mbox{ if }J\neq (i_1,\cdots,i_1) \mbox{ and  }(i_2,i_3,\cdots,i_m)=J ,\\
\underline{a}_{i_1i_2\cdots i_m},\mbox{ otherwise}.
\end{cases}$$
  \item [(b)] When $\pi=0$, set $A^{(J)}=\overline{A}$.
\end{itemize}
Then $\mathcal{A}^I$ is an interval $B_{\pi}^{R^I}$-tensor if and only if every $A^{(J)}$ is a $B_{\pi}^{R}$-tensor.
\end{corollary}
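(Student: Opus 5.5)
Since every $\mathcal{A}^{(J)}$ is constructed from entries of $\underline{\mathcal{A}}$ and $\overline{\mathcal{A}}$, each lies in $\mathcal{A}^I$. So the plan is to deduce the corollary from Theorem \ref{th1}, reading the definition of $\mathcal{A}^{(J)}$ in case (b) as applying when $\pi_J=0$. Necessity is then immediate: if $\mathcal{A}^I$ is an interval $B_{\pi}^{R^I}$-tensor, every member of $\mathcal{A}^I$, and in particular every $\mathcal{A}^{(J)}$, is a $B_{\pi}^R$-tensor.

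For sufficiency we verify conditions (a) and (b) of Theorem \ref{th1}. Condition (a), $R_i(\underline{\mathcal{A}})>0$, is part of the hypothesis. Now fix $i\in[n]$ and $J\in[n]^{m-1}\setminus\{(i,\cdots,i)\}$.

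If $0<\pi_J\leq 1$, look at row $i$ of $\mathcal{A}^{(J)}$. In that row the entry at position $J$ is $\overline{a}_{iJ}$, and every other entry is the corresponding entry of $\underline{\mathcal{A}}$. Hence
$$R_i(\mathcal{A}^{(J)})=\underline{a}_{ii\cdots i}+\overline{a}_{iJ}+\sum_{K\neq J,\,K\neq(i,\cdots,i)}\underline{a}_{iK}.$$
Condition (b) of Definition \ref{def-bt} for $\mathcal{A}^{(J)}$ at the pair $(i,J)$ gives $\pi_J R_i(\mathcal{A}^{(J)})>\overline{a}_{iJ}$. Dividing by $\pi_J>0$ and subtracting $\overline{a}_{iJ}$ from both sides yields exactly $(b_1)$, which is the same computation as in the necessity part of Theorem \ref{th1}.

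If $\pi_J=0$, then $\mathcal{A}^{(J)}=\overline{\mathcal{A}}$. Condition (b) of Definition \ref{def-bt} gives $0=\pi_J R_i(\overline{\mathcal{A}})>\overline{a}_{iJ}$, which is $(b_2)$. Since (a) and (b) of Theorem \ref{th1} both hold, that theorem shows $\mathcal{A}^I$ is an interval $B_{\pi}^{R^I}$-tensor.

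The only delicate point is bookkeeping. A single tensor $\mathcal{A}^{(J)}$ serves every row $i$ simultaneously, because the definition replaces position $J$ in each row $i_1$ with $J\neq(i_1,\cdots,i_1)$. So we must check that row $i$ of $\mathcal{A}^{(J)}$ differs from row $i$ of $\underline{\mathcal{A}}$ only at position $J$. This holds by construction, so the argument goes through.
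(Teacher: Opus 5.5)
Your proof is correct and follows the route the paper intends. The paper gives no separate proof and presents the corollary as a reformulation of Theorem \ref{th1}, whose necessity argument uses exactly your tensors $\mathcal{A}^{(J)}$ (called $\mathcal{A}^{(0)}$ there) and $\overline{\mathcal{A}}$ to extract $(b_1)$ and $(b_2)$; reading case (b) as $\pi_J=0$ is right, and your two cases are exhaustive because $0\le\pi_J\le\sum_{K}\pi_K\le 1$.
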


If the vector $\pi>0$ satisfying condition \eqref{sumpi}, then $0<\pi_J<1$ for all $J\in[n]^{m-1}$. In this case, to determine whether an interval tensor $\mathcal{A}^I$ with positive row sums is an interval $B_{\pi}^{R^I}$-tensor, it is necessary to verify whether at least $n^{m-1}$ tensors are $B_{\pi}^{R}$-tensors, i.e., all tensors $\mathcal{A}^{(J)}$ as defined in Corollary \ref{c41}. In other words, none of these tensors $\mathcal{A}^{(J)}$ can be omitted.

\begin{example}\label{e4.2}
Given a vector $\pi=(\pi_1,\pi_2,\cdots,\pi_n)>0$ satisfy condition \eqref{sumpi}. Fixed $i_0\in[n]$ and $J_0\in[n]^{m-1}\setminus{(i_0,\cdots,i_0)}$. Consider the interval tensor $\mathcal{A}^I=[\underline{\mathcal{A}},\overline{\mathcal{A}}]$, where $\underline{\mathcal{A}}=(\underline{a}_{i_1i_2\cdots i_m})$ is given by
$$\underline{a}_{i_1i_2\cdots i_m}=\begin{cases}
1,\mbox{ if }i_1=i_2=\cdots =i_m,\\
0,\mbox{ otherwise},
\end{cases}$$
and $\overline{\mathcal{A}}=(\overline{a}_{i_1i_2\cdots i_m})$ is given by
$$\overline{a}_{i_1i_2\cdots i_m}=\begin{cases}
1,\mbox{ if }i_1=i_2=\cdots =i_m,\\
\frac{\pi_{J_0}}{1-\pi_{J_0}}, \mbox{ if } i_1=i_0\mbox{ and }(i_2,\cdots,i_m)=J_0,\\
0,\mbox{ otherwise}.
\end{cases}$$
For every $J\in[n]^{m-1}\setminus J_0$, the associated tensor $\mathcal{A}^{(J)}$ equals the identity tensor $\mathcal{I}$, which is trivially a $B_{\pi}^{R}$-tensor, hence each of the $n^{m-1}-1$ tensors $\mathcal{A}^{(J)}$ is a $B_{\pi}^{R}$-tensor. Yet this cannot ensure that $\mathcal{A}^{(J_0)}$ is also a $B_{\pi}^{R}$-tensor, that is this fails to guarantee that all $\mathcal{A}\in \mathcal{A}^I$ are $B_{\pi}^{R}$-tensors. In fact,
$$\pi_{J_0}R_{i_0}(\mathcal{A}^{(J_0)})=\pi_{J_0}(1+\frac{\pi_{J_0}}{1-\pi_{J_0}})=\frac{\pi_{J_0}}{1-\pi_{J_0}}=a_{i_0J_0}^{(J_0)},$$
which does not satisfy the definition of a $B_{\pi}^{R}$-tensors. Hence $\mathcal{A}^I$ is not an interval $B_{\pi}^{R^I}$-tensor.
\end{example}
This example illustrates that even when $n^{m-1}-1$ out of the $n^{m-1}$ tensors $\mathcal{A}^{(J)}$ in Corollary \ref{c41} are $B_{\pi}^{R}$-tensors, one cannot conclude that $\mathcal{A}^I$ is an interval $B_{\pi}^{R^I}$-tensor, all $n^{m-1}$ must be checked.

For an interval tensor $\mathcal{A}^I$ in $T_{m,n}$ and for any $J\in[n]^{m-1}$, define
$$\overline{M}_{J}=\max\limits_{\substack{i\in[n]\\ (i,\cdots,i)\neq J}}\overline{M}_{iJ}=\max\{ \frac{\overline{a}_{iJ}}{\overline{a}_{iJ}+\sum\limits_{\substack{K\in[n]^{m-1}\\ K\neq J}}\underline{a}_{iK}},\frac{\overline{a}_{iJ}}{R_i(\overline{\mathcal{A}})}\}.$$

Furthermore, using the corrected characterization of deterministic $B_{\pi}^{R}$-tensors from Proposition \ref{p2}, we can translate the above criterion into a unified inequality condition on the vector $\pi$.

\begin{corollary}\label{c42}
Let $\mathcal{A}^I$ be an interval tensor in $T_{m,n}$ with $R_i(\underline{\mathcal{A}})>0$ for all $i\in[n]$. Then $\mathcal{A}^I$ is an interval $B_{\pi}^{R^I}$-tensor if and only if the nonnegative vector $\pi$ satisfies condition \eqref{sumpi}, and for all $J\in[n]^{m-1}$,
$\pi_J>\overline{M}_J.$
\end{corollary}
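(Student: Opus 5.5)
The plan is to reduce everything to Theorem \ref{th1}, after reading $\overline{M}_{iJ}$ as a maximum of the ordinary quotients $M_{iJ}$ over two extremal tensors. Fix $i\in[n]$ and $J\in[n]^{m-1}\setminus\{(i,\cdots,i)\}$. The denominator $\overline{a}_{iJ}+\sum_{K\neq J}\underline{a}_{iK}$ is exactly $R_i(\mathcal{A}^{(J)})$, where $\mathcal{A}^{(J)}$ is the tensor of Corollary \ref{c41}, case (a) (equivalently $\mathcal{A}^{(0)}$ in the proof of Theorem \ref{th1}). Hence
$$\overline{M}_{iJ}=\max\{M_{iJ}(\mathcal{A}^{(J)}),\,M_{iJ}(\overline{\mathcal{A}})\}.$$
Both tensors lie in $\mathcal{A}^I$, so both denominators are at least $R_i(\underline{\mathcal{A}})>0$. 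Since $\pi_J>\overline{M}_J$ means $\pi_J>\overline{M}_{iJ}$ for every admissible $i$, it suffices to work with a fixed pair $(i,J)$. Throughout, $\pi$ is assumed to satisfy \eqref{sumpi}, which appears on both sides of the equivalence; in particular every $\pi_J\le\sum_K\pi_K\le1$.

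\emph{Necessity.} If $\mathcal{A}^I$ is an interval $B_{\pi}^{R^I}$-tensor, then $\mathcal{A}^{(J)}$ and $\overline{\mathcal{A}}$ are $B_\pi^R$-tensors. By Proposition \ref{p2} (or directly from Definition \ref{def-bt}(b), dividing by the positive row sum) we get $\pi_J>M_J(\mathcal{A}^{(J)})\ge M_{iJ}(\mathcal{A}^{(J)})$ and $\pi_J>M_{iJ}(\overline{\mathcal{A}})$. Hence $\pi_J>\overline{M}_{iJ}$, and taking the maximum over $i$ gives $\pi_J>\overline{M}_J$.

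\emph{Sufficiency.} I will verify conditions (a), (b$_1$), (b$_2$) of Theorem \ref{th1}; condition (a) is part of the hypothesis.
\begin{itemize}
\item[(b$_1$)] If $0<\pi_J\le1$, the inequality $\pi_J>\overline{a}_{iJ}/R_i(\mathcal{A}^{(J)})$ multiplies out to $\pi_J(\overline{a}_{iJ}+\sum_{K\neq J}\underline{a}_{iK})>\overline{a}_{iJ}$. Rearranging and dividing by $\pi_J>0$ gives exactly (b$_1$).
\item[(b$_2$)] If $\pi_J=0$, the second term gives $0>\overline{a}_{iJ}/R_i(\overline{\mathcal{A}})$. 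Since $R_i(\overline{\mathcal{A}})\ge R_i(\underline{\mathcal{A}})>0$, this yields $\overline{a}_{iJ}<0$, which is (b$_2$).
\end{itemize}
Theorem \ref{th1} then shows that $\mathcal{A}^I$ is an interval $B_{\pi}^{R^I}$-tensor.

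The main point needing care is bookkeeping rather than depth. One must check that every denominator in $\overline{M}_{iJ}$ is positive, so that dividing and cross-multiplying preserves the inequalities. One must also confirm that the two quotients match the two regimes $\pi_J>0$ and $\pi_J=0$ of Theorem \ref{th1}; in particular the $\overline{\mathcal{A}}$ quotient is what forces the sign condition when $\pi_J=0$. The remaining subtlety is to state clearly that \eqref{sumpi}, with the product form $\pi_J=\pi_{i_2}\cdots\pi_{i_m}$, is part of the hypothesis in both directions, as in Proposition \ref{p2}.
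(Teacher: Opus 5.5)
Your proposal is correct and follows essentially the paper's route. The denominators in $\overline{M}_{iJ}$ are the $i$th row sums of the extremal tensors $\mathcal{A}^{(J)}$ and $\overline{\mathcal{A}}$ from Corollary~\ref{c41}, so $\pi_J>\overline{M}_J$ is a rewriting of the endpoint criteria in Theorem~\ref{th1}. The paper phrases this via Corollary~\ref{c41} and Proposition~\ref{p2}; you verify (b$_1$) and (b$_2$) directly, which is slightly cleaner because it only uses the $(i,J)$ entry of each $\mathcal{A}^{(J)}$.
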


Similar to the deterministic case, pre-specifying an appropriate vector $\pi$ for interval tensors can be challenging in practice. To address this, we establish a constructive sufficient condition for the existence of such a vector $\pi$ in the interval setting.
\begin{theorem}\label{thpi}
Let $\mathcal{A}^I$ be an interval tensor in $T_{m,n}$, and $R_i(\underline{\mathcal{A}})>0$ for all $i\in[n]$. If $$\overline{S}:=\sum\limits_{i=1}^n[\max\limits_{\{J\in[n]^{m-1}:i\in J\}}\overline{M}_J]_+^{\frac{1}{m-1}}<1,$$
then there exists a constructible positive vector $\pi\in\mathbb{R}^n$ satisfying condition \eqref{sumpi} such that $\mathcal{A}^I$ is an interval $B_{\pi}^{R^I}$-tensor.
\end{theorem}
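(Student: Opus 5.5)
The plan is to repeat the construction in the proof of Theorem \ref{p3}, replacing $M_J$ by $\overline{M}_J$, and then to conclude with Corollary \ref{c42}. Abbreviate $c_i:=[\max_{\{J\in[n]^{m-1}:i\in J\}}\overline{M}_J]_+^{\frac{1}{m-1}}\ge 0$, so that $\overline{S}=\sum_i c_i<1$. Fix $0<\varepsilon\le\frac{1-\overline{S}}{n}$ (for instance $\varepsilon=\frac{1-\overline{S}}{n}$) and define $\pi_i:=c_i+\varepsilon>0$.

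With this choice $\sum_i\pi_i\le \overline{S}+n\varepsilon\le 1$, and the sum is positive. Because $\pi_J$ has product form,
$$\sum_{J\in[n]^{m-1}}\pi_J=\Big(\sum_{i=1}^n\pi_i\Big)^{m-1}\in(0,1],$$
so condition \eqref{sumpi} holds.

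Next I verify $\pi_J>\overline{M}_J$ for every $J=(i_2,\cdots,i_m)$. Each index $i_k$ occurs in $J$, so $c_{i_k}\ge[\overline{M}_J]_+^{\frac{1}{m-1}}$ for $k=2,\cdots,m$. Since every $\pi_{i_k}>c_{i_k}\ge0$, the product is strictly monotone and
$$\pi_J=\prod_{k=2}^m\pi_{i_k}>\prod_{k=2}^m c_{i_k}\ge [\overline{M}_J]_+\ge\overline{M}_J.$$
The strict inequality needs $\varepsilon>0$; this is exactly the point of the remark after Theorem \ref{p3}.

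The hypothesis $R_i(\underline{\mathcal{A}})>0$ is assumed, so Corollary \ref{c42} applies and $\mathcal{A}^I$ is an interval $B_{\pi}^{R^I}$-tensor.

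The main obstacle is that the argument leans entirely on Corollary \ref{c42}, so the denominators in $\overline{M}_{iJ}$ must be positive for $\overline{M}_J$ to bound $a_{iJ}/R_i(\mathcal{A})$ over the whole interval. They are: $\overline{a}_{iJ}+\sum_{K\ne J}\underline{a}_{iK}\ge R_i(\underline{\mathcal{A}})>0$, and $R_i(\overline{\mathcal{A}})\ge R_i(\underline{\mathcal{A}})>0$.

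If one wants to avoid relying on the corollary, the fallback is to check Theorem \ref{th1}(b$_1$) directly. Since $\pi>0$, we have $0<\pi_J<1$, so only case (b$_1$) arises. From $\pi_J>\overline{M}_{iJ}\ge\overline{a}_{iJ}/(\overline{a}_{iJ}+\sum_{K\ne J}\underline{a}_{iK})$, multiply by the positive denominator to get
$$\pi_J\Big(\overline{a}_{iJ}+\sum_{K\neq J}\underline{a}_{iK}\Big)>\overline{a}_{iJ}.$$
Here the sum over $K\ne J$ includes the diagonal term $\underline{a}_{ii\cdots i}$. Rearranging gives exactly (b$_1$), and (a) holds by hypothesis. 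Theorem \ref{th1} then yields the claim, which makes the proof self-contained.
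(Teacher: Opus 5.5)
Your proof is correct, and the construction of $\pi$ is the same as the paper's, including the bound $\pi_J>\prod_k c_{i_k}\ge[\overline{M}_J]_+$ and the sum constraint. You are also right that in general only $\sum_i\pi_i\le 1$ holds, with equality exactly when $\varepsilon=\frac{1-\overline{S}}{n}$.

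The difference is the final step. The paper verifies Definition~\ref{def-bt} directly for each $\mathcal{A}\in\mathcal{A}^I$. It asserts that the definitions give the pointwise bound $a_{iJ}/R_i(\mathcal{A})\le\overline{M}_{iJ}$. You instead pass through the endpoint criterion: Corollary~\ref{c42}, or, self-containedly, Theorem~\ref{th1}(b$_1$). That condition is equivalent to $\pi_J>\overline{a}_{iJ}/(\overline{a}_{iJ}+\sum_{K\ne J}\underline{a}_{iK})$ and needs only the positivity of that denominator.

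Your route is the more careful one, because the paper's pointwise bound is not automatic. Take $m=3$, $n=2$, first row $\underline{a}_{111}=\overline{a}_{111}=-1$, $a_{112}=a_{121}=0$, $\underline{a}_{122}=2$, $\overline{a}_{122}=3$. Then $R_1(\underline{\mathcal{A}})=1>0$ and $\overline{M}_{1J}=\frac{3}{2}$ for $J=(2,2)$, yet $\underline{a}_{122}/R_1(\underline{\mathcal{A}})=2$. The bound does hold under the theorem's hypothesis. Indeed, $\overline{S}<1$ forces $\overline{M}_{iJ}<1$, hence $\sum_{K\ne J}\underline{a}_{iK}>0$, and a monotonicity argument in the entries then applies. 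That extra step is exactly what going through Theorem~\ref{th1} avoids.

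For the same reason, your ``main obstacle'' paragraph overstates things. Positive denominators alone do not make $\overline{M}_J$ bound $a_{iJ}/R_i(\mathcal{A})$ over the whole interval. This does no harm, since neither Corollary~\ref{c42} nor your fallback uses that bound.
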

\begin{proof}
For any $\mathcal{A}\in \mathcal{A}^I$ and $i\in[n]$, we have $R_i(\mathcal{A})\geq R_i(\underline{\mathcal{A}})>0$. Moreover, for $J\in[n]^{m-1}\setminus\{(i,\cdots,i)\}$, the definitions of $\overline{M}_{iJ}$ and $\overline{M}_J$ yield
$$\frac{a_{iJ}}{R_i(\mathcal{A})}\leq \overline{M}_{iJ}\leq\overline{M}_J.$$
For each $i\in[n]$, choose any $0<\xi\leq\frac{1-\overline{S}}{n}$, and set
$$\pi_i=[\max\limits_{\{J\in[n]^{m-1}:i\in J\}}\overline{M}_J]_+^{\frac{1}{m-1}}+\xi>0$$
(For instance, one may simply take $\xi=\frac{1-\overline{S}}{n}$). Then
$$\sum_{i=1}^n \pi_i=\overline{S}+n\cdot\frac{1-\overline{S}}{n}=1,$$
and as before,
$$\sum\limits_{J\in[n]^{m-1}} \pi_{J}=\sum_{i_2,\cdots,i_m=1}^n \pi_{i_2}\cdots \pi_{i_m}=(\sum_{i=1}^n \pi_i)^{m-1}=1,$$
so $\pi$ satisfies condition \eqref{sumpi}.

Now for any $i\in[n]$ and any $J=(i_2,\cdots,i_m)\in[n]^{m-1}\setminus\{(i,\cdots,i)\}$, let
$$\pi_{J}=\pi_{i_2}\cdots\pi_{i_m}>\prod\limits_{k=2}^m[\max\limits_{\{K\in[n]^{m-1}:i_k\in K\}}\overline{M}_K]_+^{\frac{1}{m-1}}\geq \frac{a_{iJ}}{R_i(\mathcal{A})},$$
which gives
$$\pi_{J}R_i(\mathcal{A})>a_{iJ}.$$
Therefore, there exists a constructible positive vector $\pi\in\mathbb{R}^n$ satisfying condition \eqref{sumpi}, such that $\mathcal{A}$ is a $B_{\pi}^{R}$-tensor.

\end{proof}

\begin{corollary}
Let $\mathcal{A}^I$ be an interval tensor in $T_{m,n}$ with $\underline{A}\geq0$ and $R_i(\underline{A})>0$ for all $i\in[n]$. If $$\sum\limits_{i=1}^n(\max\limits_{\{J\in[n]^{m-1}:i\in J\}}\overline{M}_J)^{\frac{1}{m-1}}<1,$$
then there exists a constructible positive vector $\pi\in\mathbb{R}^n$ satisfying condition \eqref{sumpi}, such that $\mathcal{A}^I$ is an interval $B_{\pi}^{R^I}$-tensor.
\end{corollary}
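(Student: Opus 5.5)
This corollary follows from Theorem \ref{thpi} by the same reduction that gives Corollary \ref{c3.1} from Theorem \ref{p3}. The plan is to show that the hypothesis $\underline{\mathcal{A}}\geq0$ forces $\overline{M}_J\geq 0$ for every $J\in[n]^{m-1}$. Then the positive part in $\overline{S}$ can be dropped, and the displayed sum equals $\overline{S}$.

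First, I establish the sign facts. Since $\underline{\mathcal{A}}\leq\overline{\mathcal{A}}$ and $\underline{\mathcal{A}}\geq0$, every entry satisfies $\overline{a}_{iJ}\geq\underline{a}_{iJ}\geq0$. By hypothesis $R_i(\underline{\mathcal{A}})>0$. Every tensor in the interval has row sums at least $R_i(\underline{\mathcal{A}})$, so $R_i(\overline{\mathcal{A}})\geq R_i(\underline{\mathcal{A}})>0$.

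The denominator $\overline{a}_{iJ}+\sum_{K\neq J}\underline{a}_{iK}$ is the $i$th row sum of the tensor obtained from $\underline{\mathcal{A}}$ by raising the single entry at position $(i,J)$ to its upper bound. Hence it is at least $R_i(\underline{\mathcal{A}})>0$. Both quotients in the definition of $\overline{M}_{iJ}$ therefore have a nonnegative numerator and a positive denominator, so $\overline{M}_{iJ}\geq0$.

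Next, $\overline{M}_J$ is a maximum of such quantities over the indices $i$ with $(i,\cdots,i)\neq J$. This index set is nonempty whenever $n\geq2$, so $\overline{M}_J\geq 0$. The degenerate case $n=1$ has only $J=(1,\cdots,1)$ and no off-diagonal entries. There I would either note that the hypothesis reduces to $\pi_1=1$ trivially working, or set the empty maximum to $0$ (the convention implicitly used in Theorem \ref{thpi}). Consequently,
$$\Big[\max\limits_{\{J\in[n]^{m-1}:i\in J\}}\overline{M}_J\Big]_+^{\frac{1}{m-1}}=\Big(\max\limits_{\{J\in[n]^{m-1}:i\in J\}}\overline{M}_J\Big)^{\frac{1}{m-1}},$$
and the $(m-1)$th root is well defined and real.

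Finally, summing over $i$ shows that the given sum equals $\overline{S}$, so $\overline{S}<1$. All hypotheses of Theorem \ref{thpi} are then met, and it supplies the constructible positive $\pi$ satisfying \eqref{sumpi}, namely $\pi_i=(\max_{J\ni i}\overline{M}_J)^{1/(m-1)}+\frac{1-\overline{S}}{n}$, with $\mathcal{A}^I$ an interval $B_{\pi}^{R^I}$-tensor.

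The only step requiring care is the positivity of the denominators in $\overline{M}_{iJ}$, and that is settled by the row-sum comparison with $\underline{\mathcal{A}}$ above.
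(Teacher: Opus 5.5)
Your proposal is correct and follows the same route as the paper. Nonnegativity of $\underline{\mathcal{A}}$ makes every numerator nonnegative, and $R_i(\underline{\mathcal{A}})>0$ makes every denominator positive, so $\overline{M}_J\geq 0$; the positive part can then be dropped, and the hypothesis becomes $\overline{S}<1$ in Theorem \ref{thpi}. Your check that the denominators are positive and your remark on the degenerate case $n=1$ are details the paper leaves implicit.
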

\begin{proof}
Since $\underline{A}\geq0$. all $\overline{M}_J$ are nonnegative, so $[\cdot]_+$ can be dropped. The statement the follows directly from Theorem \ref{thpi}.
\end{proof}
\begin{example}
Let $\mathcal{A}^I=[\underline{\mathcal{A}},\overline{\mathcal{A}}]$, where $\underline{a}_{111}=\underline{a}_{222}=\overline{a}_{111}=\overline{a}_{222}=3$, $\underline{a}_{122}=\underline{a}_{211}=\overline{a}_{122}=\overline{a}_{211}=0.2$, $\underline{a}_{112}=\underline{a}_{121}=\underline{a}_{221}=\underline{a}_{212}=0$ and $\overline{a}_{112}=\overline{a}_{121}=\overline{a}_{221}=\overline{a}_{212}=0.6$.
We have $R_1(\underline{\mathcal{A}})=R_2(\underline{\mathcal{A}})=3.2>0,$ $\overline{M}_{12}=\frac{3}{19}$, $\overline{M}_{21}=\frac{3}{19}$, $\overline{M}_{11}=\frac{1}{16}$, and $\overline{M}_{22}=\frac{1}{16}$, then
$$\overline{S}=(\max\limits_{J:1\in J}\overline{M}_J)^{\frac{1}{2}}+(\max\limits_{J:2\in J}\overline{M}_J)^{\frac{1}{2}}=2\times\sqrt{\frac{3}{19}}<1.$$
According to Theorem \ref{thpi}, we can take $\xi=0.1$, then $\pi=(\pi_1,\pi_2)=(\frac{10\sqrt{57}+19}{190},\frac{10\sqrt{57}+19}{190})$ satisfies condition \eqref{sumpi}, and $\mathcal{A}^I$ is an interval $B_{\pi}^{R^I}$-tensor.
\end{example}

Similar to Corollary \ref{c3.2}, we obtain a sufficient condition of positive definiteness for a symmetric interval tensor, which is equivalent to all tensors in the symmetric interval tensor to have positive H-eigenvalues.
\begin{corollary}\label{c4.5}
Let $m$ be even, and let $\mathcal{A}^I$ be a symmetric interval tensor in $T_{m,n}$ with $R_i(\underline{A})>0$ for all $i\in[n]$. If $\overline{S}<1$,
then $\mathcal{A}^I$ is a positive definite interval tensor.
\end{corollary}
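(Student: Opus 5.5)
The plan mirrors Corollary \ref{c3.2}, carried into the interval setting through Theorem \ref{thpi}, Corollary \ref{c4.1}, and the fact that for symmetric even-order tensors the $P$-property implies positive definiteness \cite{sq2015}.

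\emph{Step 1.} Since $R_i(\underline{\mathcal{A}})>0$ for all $i$ and $\overline{S}<1$, Theorem \ref{thpi} gives a constructible positive vector $\pi$ satisfying \eqref{sumpi} for which $\mathcal{A}^I$ is an interval $B_{\pi}^{R^I}$-tensor. Thus every $\mathcal{A}\in\mathcal{A}^I$ is a $B_{\pi}^R$-tensor for the same $\pi$.

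\emph{Step 2.} Because $m$ is even and $\mathcal{A}^I$ is symmetric, Corollary \ref{c4.1} shows that $\mathcal{A}^I$ is an interval $P$-tensor. By Lemma \ref{pip}, this reduces to checking that each $\mathcal{A}^z$ is a symmetric $B_\pi^R$-tensor. Symmetry of $\mathcal{A}^z$ follows from symmetry of $\mathcal{A}^c$ and $\Delta$: scaling entry $\delta_{i_1\cdots i_m}$ by $z_{i_1}\cdots z_{i_m}$ preserves symmetry. Also $\mathcal{A}^z\in\mathcal{A}^I$, so it is a $B_\pi^R$-tensor, and Proposition \ref{p1} then makes it a $P$-tensor.

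\emph{Step 3 (main obstacle).} Passing from ``interval $P$-tensor'' to ``positive definite interval tensor'' needs care, because a member $\mathcal{A}$ of a symmetric interval tensor need not itself be symmetric. Positive definiteness depends only on the form $\mathcal{A}x^m$, which equals $\mathrm{Sym}(\mathcal{A})x^m$. So for arbitrary $\mathcal{A}\in\mathcal{A}^I$ I would consider its symmetrization $\mathrm{Sym}(\mathcal{A})$, the average over all index permutations.

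The key observation is that $\mathcal{A}^c$ and $\Delta$ are symmetric, so the bounds $\mathcal{A}^c-\Delta\le\mathcal{A}\le\mathcal{A}^c+\Delta$ are permutation invariant. Averaging preserves them, giving $\mathrm{Sym}(\mathcal{A})\in\mathcal{A}^I$.

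It then follows that $\mathrm{Sym}(\mathcal{A})$ is a symmetric $B_\pi^R$-tensor of even order. By Proposition \ref{p1} it is a symmetric $P$-tensor, and hence positive definite \cite{sq2015}. Therefore
\[
\mathcal{A}x^m=\mathrm{Sym}(\mathcal{A})x^m>0 \quad\text{for all } x\neq 0,
\]
so every $\mathcal{A}\in\mathcal{A}^I$ is positive definite and $\mathcal{A}^I$ is a positive definite interval tensor.

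Finally, the equivalence with positive H-eigenvalues, as noted before the statement, follows from \cite{q2005} applied to each symmetric member.
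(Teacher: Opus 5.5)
Your proof is correct. It follows the chain the paper intends: the paper gives no written proof and only says the result is obtained ``similar to Corollary \ref{c3.2}'', namely Theorem \ref{thpi}, then Corollary \ref{c4.1} (via Proposition \ref{p1}), then the fact that symmetric even-order $P$-tensors are positive definite.

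Your Step 3 supplies something that one-line justification omits. Members of a symmetric interval tensor need not be symmetric, and a nonsymmetric even-order $P$-tensor need not be positive definite. Already for matrices, $\begin{pmatrix}1&-3\\0&1\end{pmatrix}$ is a $P$-matrix, yet $x^{\top}Ax=-1$ at $x=(1,1)^{\top}$. So knowing that $\mathcal{A}^I$ is an interval $P$-tensor does not by itself give the conclusion.

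Your observation closes the gap cleanly. Since $\underline{\mathcal{A}}$ and $\overline{\mathcal{A}}$ are symmetric and $\mathcal{A}^I$ is convex, $\mathrm{Sym}(\mathcal{A})\in\mathcal{A}^I$. Combined with $\mathcal{A}x^m=\mathrm{Sym}(\mathcal{A})x^m$, this gives positive definiteness of every member. It also makes your Step 2, and Lemma \ref{pip}, unnecessary: Step 3 goes directly from the interval $B_{\pi}^{R^I}$ property to positive definiteness of every member.

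A fix closer to the paper's own machinery would take $z\in Y$ with $z_ix_i=|x_i|$ and use $\mathcal{A}x^m\ge\mathcal{A}^cx^m-\Delta|x|^m=\mathcal{A}^zx^m>0$, where $\mathcal{A}^z$ is a symmetric member of $\mathcal{A}^I$. Your symmetrization argument is slightly more self-contained.
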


For the important subclass of interval $Z$-tensors, the interval $B_{\pi}^{R^{I}}$ property is equivalent to strict diagonal dominance, which extends the corresponding result for deterministic tensors.

\begin{proposition}\label{p4.3}
Let $\mathcal{A}^I$ be an interval $Z$-tensor in $T_{m,n}$, and $\pi\in\mathbb{R}^n$ be a positive vector satisfying condition $\eqref{sumpi}$. Then $\mathcal{A}^I$ is an interval $B_{\pi}^{R^I}$-tensor if and only if $\underline{\mathcal{A}}$ is strictly diagonally dominant.
\end{proposition}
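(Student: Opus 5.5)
The plan is to reduce the statement to Proposition \ref{p4}, applied tensor by tensor. For a $Z$-tensor, strict diagonal dominance says exactly that all row sums are positive. The interval structure then enters only through the lower endpoint $\underline{\mathcal{A}}$, which minimizes every row sum over $\mathcal{A}^I$.

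\emph{Necessity.} Assume $\mathcal{A}^I$ is an interval $B_{\pi}^{R^I}$-tensor. Then $\underline{\mathcal{A}}\in\mathcal{A}^I$ is a $B_{\pi}^R$-tensor, and it is a $Z$-tensor because $\mathcal{A}^I$ is an interval $Z$-tensor. Since $\pi$ is positive and satisfies \eqref{sumpi}, Proposition \ref{p4} gives that $\underline{\mathcal{A}}$ is strictly diagonally dominant. To avoid any ambiguity in the quantifier ``with any positive vector $\pi$'' in Proposition \ref{p4}, I would also argue directly. Condition (a) of Definition \ref{def-bt} gives $R_i(\underline{\mathcal{A}})>0$. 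Because $\underline{a}_{iJ}\le 0$ for every off-diagonal $J$, we have
$$R_i(\underline{\mathcal{A}})=\underline{a}_{ii\cdots i}-\sum_{J\neq(i,\cdots,i)}|\underline{a}_{iJ}|,$$
so $R_i(\underline{\mathcal{A}})>0$ is exactly strict diagonal dominance of row $i$.

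\emph{Sufficiency.} Assume $\underline{\mathcal{A}}$ is strictly diagonally dominant, and take any $\mathcal{A}\in\mathcal{A}^I$. It is a $Z$-tensor, and $\mathcal{A}\geq\underline{\mathcal{A}}$ entrywise. Hence
$$a_{ii\cdots i}\ge \underline{a}_{ii\cdots i}\quad\text{and}\quad |a_{iJ}|=-a_{iJ}\le-\underline{a}_{iJ}=|\underline{a}_{iJ}|,$$
so $\mathcal{A}$ is strictly diagonally dominant, equivalently $R_i(\mathcal{A})\ge R_i(\underline{\mathcal{A}})>0$. Proposition \ref{p4} then shows $\mathcal{A}$ is a $B_{\pi}^R$-tensor. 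This can also be checked by hand: since $\pi_J>0$ for positive $\pi$ and $R_i(\mathcal{A})>0$, for off-diagonal $J$ we get $a_{iJ}\le0<\pi_JR_i(\mathcal{A})$. As $\mathcal{A}$ was arbitrary, $\mathcal{A}^I$ is an interval $B_{\pi}^{R^I}$-tensor. Alternatively, Theorem \ref{th1} can be checked directly: (a) is the row-sum positivity, (b$_2$) never occurs, and (b$_1$) holds because its right side is $\le 0$ while its left side is at least $R_i(\underline{\mathcal{A}})>0$.

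The only delicate point is the quantifier in Proposition \ref{p4}. I would sidestep it by using the direct observation above: for a $Z$-tensor and a positive $\pi$, condition (b) of Definition \ref{def-bt} is automatic once $R_i>0$. Positivity of $\pi$ is genuinely needed, since if some $\pi_J=0$, condition (b) would require $\overline{a}_{iJ}<0$, by Proposition \ref{p4.1}(d).
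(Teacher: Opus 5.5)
Your proof is correct and follows the paper's argument. For necessity you apply Proposition~\ref{p4} to the $Z$-tensor $\underline{\mathcal{A}}\in\mathcal{A}^I$; for sufficiency you show entrywise that $a_{ii\cdots i}\geq\underline{a}_{ii\cdots i}$ and $|a_{iJ}|\leq|\underline{a}_{iJ}|$, so every $\mathcal{A}\in\mathcal{A}^I$ is strictly diagonally dominant and Proposition~\ref{p4} applies. Your supplementary direct check is also valid: for a $Z$-tensor, strict diagonal dominance is exactly $R_i>0$, and then $a_{iJ}\leq 0<\pi_J R_i(\mathcal{A})$ for positive $\pi$. It makes the argument independent of how the quantifier in Proposition~\ref{p4} is read.
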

\begin{proof}
``{\bf Necessity.}'' Since $\underline{\mathcal{A}}\in \mathcal{A}^I$, then $\underline{\mathcal{A}}$ is a $B_{\pi}^{R}$-tensor and also a $Z$-tensor. According to Proposition \ref{p4}, $\underline{\mathcal{A}}$ is strictly diagonally dominant.

``{\bf Sufficiency.}'' Let $\underline{\mathcal{A}}$ be strictly diagonally dominant, then for all $i\in[n]$,
$$\underline{a}_{ii\cdots i}>\sum\limits_{J\in[n]^{m-1}\setminus\{(i,\cdots,i)\}}|\underline{a}_{iJ}|.$$
As $\underline{\mathcal{A}}\in \mathcal{A}^I$ and  $\mathcal{A}^I$ is an interval $Z$-tensor, for every $\mathcal{A}\in \mathcal{A}^I$ and all $i\in[n]$,
$$a_{ii\cdots i}\geq\underline{a}_{ii\cdots i}>\sum\limits_{J\in[n]^{m-1}\setminus\{(i,\cdots,i)\}}|\underline{a}_{iJ}|\geq \sum\limits_{J\in[n]^{m-1}\setminus\{(i,\cdots,i)\}}|a_{iJ}|.$$
Hence, each $\mathcal{A}\in \mathcal{A}^I$ is strictly diagonally dominant. Therefore, by Proposition \ref{p4}, each $\mathcal{A}\in \mathcal{A}^I$ is a $B_{\pi}^R$-tensor, then $\mathcal{A}^I$ is an interval $B_{\pi}^{R^I}$-tensor.
\end{proof}

From Proposition \ref{thp}, we have the following conclusion.
\begin{corollary}
Let nonnegative vector $\pi\in\mathbb{R}^n$ satisfy condition \eqref{sumpi}, $\sigma$ be a permutation of $[m]$ satisfying $\sigma(1) = 1$. If $\mathcal{A}^I = [\underline{\mathcal{A}}, \overline{\mathcal{A}}]$ is an interval $B_{\pi}^{R^I}$-tensor in $T_{m,n}$, then $(\mathcal{A}^I)^\sigma=[\underline{\mathcal{A}}^\sigma, \overline{\mathcal{A}}^\sigma]$ is also an interval $B_{\pi}^{R^I}$-tensor.
\end{corollary}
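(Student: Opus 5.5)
The plan is to reduce the claim to Proposition \ref{thp}, applied tensor by tensor. The first step is to describe the set $(\mathcal{A}^I)^\sigma=\{\mathcal{A}^\sigma:\mathcal{A}\in\mathcal{A}^I\}$. The map $\mathcal{A}\mapsto\mathcal{A}^\sigma$ is a bijection of $T_{m,n}$ that permutes entries, with inverse $\mathcal{B}\mapsto\mathcal{B}^{\sigma^{-1}}$. Since componentwise order is preserved under a permutation of positions, we have $\underline{\mathcal{A}}\le\mathcal{A}\le\overline{\mathcal{A}}$ if and only if $\underline{\mathcal{A}}^\sigma\le\mathcal{A}^\sigma\le\overline{\mathcal{A}}^\sigma$. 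In particular $\underline{\mathcal{A}}^\sigma\le\overline{\mathcal{A}}^\sigma$, so $[\underline{\mathcal{A}}^\sigma,\overline{\mathcal{A}}^\sigma]$ is a genuine interval tensor, and it equals $\{\mathcal{A}^\sigma:\mathcal{A}\in\mathcal{A}^I\}$.

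Next, take any $\mathcal{B}\in[\underline{\mathcal{A}}^\sigma,\overline{\mathcal{A}}^\sigma]$ and write $\mathcal{B}=\mathcal{A}^\sigma$ with $\mathcal{A}=\mathcal{B}^{\sigma^{-1}}\in\mathcal{A}^I$. By hypothesis $\mathcal{A}$ is a $B_{\pi}^R$-tensor with $R=R(\mathcal{A})$. Proposition \ref{thp} then gives that $\mathcal{B}=\mathcal{A}^\sigma$ is a $B_{\pi}^R$-tensor. Its proof also shows $R_i(\mathcal{A}^\sigma)=R_i(\mathcal{A})$, because $\sigma(1)=1$ only reorders each row. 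So the row sum vector of $\mathcal{B}$ is its own row sum $R(\mathcal{B})$, and it lies in the row-sum interval of $(\mathcal{A}^I)^\sigma$, namely $[R(\underline{\mathcal{A}}^\sigma),R(\overline{\mathcal{A}}^\sigma)]=[R(\underline{\mathcal{A}}),R(\overline{\mathcal{A}})]$. Since $\mathcal{B}$ was arbitrary, $(\mathcal{A}^I)^\sigma$ is an interval $B_{\pi}^{R^I}$-tensor with the same $\pi$.

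The only step needing care is the set identity in the first paragraph, namely that the image of the box is exactly the box of the images. This holds because $\sigma$ acts on positions only, so both inclusions come from the bijection $\mathcal{A}\mapsto\mathcal{A}^\sigma$ together with the order equivalence. Everything else is a direct call to Proposition \ref{thp}.

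As an alternative check, one could verify conditions (a) and (b) of Theorem \ref{th1} for the permuted endpoints. Condition (a) is unchanged because row sums are invariant. For (b), put $J'=\sigma(J)$; then $\pi_J=\pi_{J'}$, and the sum over $K\neq J$ of the permuted lower entries equals the sum over $K'\neq J'$ of the original lower entries. The inequalities in (b) for $(\mathcal{A}^I)^\sigma$ are therefore exactly those for $\mathcal{A}^I$.
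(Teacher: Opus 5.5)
Your proof is correct and takes the paper's route: the paper obtains the corollary directly from Proposition~\ref{thp}, applied to each tensor in the interval. Your explicit check that $(\mathcal{A}^I)^\sigma=\{\mathcal{A}^\sigma:\mathcal{A}\in\mathcal{A}^I\}$, via the order-preserving bijection $\mathcal{A}\mapsto\mathcal{A}^\sigma$ with inverse $\mathcal{B}\mapsto\mathcal{B}^{\sigma^{-1}}$, supplies the one step the paper leaves implicit.
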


\section{Applications}
\setcounter{section}{5}
The concept of interval $B_{\pi}^{R^I}$-tensors introduced in this paper provides practical sufficient conditions for identifying interval $P$-tensors, and in the even-order symmetric case, positive definite interval tensors. The verification of the interval $B_{\pi}^{R^I}$-property can be carried out through two approaches, i.e., Theorem \ref{th1} or Theorem \ref{thpi}.

In polynomial optimization, the positive definiteness of an even-order symmetric tensor is equivalent to the positivity of the associated homogeneous polynomial. When the polynomial coefficients are subject to uncertainty or lie within prescribed intervals, the problem reduces to determining whether every tensor in the interval is positive definite. Corollary \ref{c4.5} together with Theorems \ref{th1} and \ref{thpi} provides verifiable sufficient conditions that require only elementary algebraic operations on the endpoint tensors, without resorting to semi-definite programming or sum-of-squares representations.

Tensor complementarity problems (TCPs), high-dimensional generalizations of linear complementarity problems, seek a vector $x\in\mathbb{R}^n$ such that $$\mathbf{x} \geq 0, \quad \mathcal{A}\mathbf{x}^{m-1} + \mathbf{q} \geq 0, \quad \mathbf{x}^{\top} (\mathcal{A}\mathbf{x}^{m-1} + \mathbf{q}) = 0,$$
Widely applied in game theory, contact mechanics, and equilibrium models, their well-posedness depends on tensor structural properties \cite{sq2015}. For deterministic $B_{\pi}^{R}$-tensors with $\pi\geq0$, corresponding TCPs have nonempty and compact solution sets for any $\mathbf{q} \in \mathbb{R}^n$ as they are $P$-tensors \cite{bhw2016}. Our interval $B_{\pi}^{R^I}$-tensor theory extends this to interval TCPs (ITCPs): if $\mathcal{A}^I$ is an interval $B_{\pi}^{R^I}$-tensor with $\pi\geq0$, the ITCP has a nonempty and compact solution set for all $\mathcal{A}\in\mathcal{A}^I$ and $q\in\mathbb{R}^n$.

Classical sufficient conditions for positive definiteness of symmetric tensors include strict diagonal dominance. For $Z$-tensors, Proposition \ref{p4} states that when the vector is positive, the $B_{\pi}^R$-property is equivalent to strict diagonal dominance. Proposition \ref{p4.3} extends this equivalence to the interval setting. However, when the interval tensor is not a $Z$-tensor (i.e., some off-diagonal upper bounds are positive), this equivalence breaks down and classical criteria become inapplicable. The following examples demonstrate that Theorems \ref{th1} and \ref{thpi} fill precisely this gap, providing effective tools for positive definiteness verification when the interval tensor is not of $Z$-type or strict diagonal dominance. While Examples \ref{e4.1} and \ref{e4.2} in Section 4 have already illustrated the direct verification process for Theorems \ref{th1} and \ref{thpi}, we supplement them here with two more general cases.

\begin{example}
Let $\mathcal{A}^I = [\underline{A}, \overline{A}]$ be a symmetric interval tensor in $T_{4,2}$. Take vector $\pi = (\frac{14}{29}, \frac{14}{29})$, which clearly satisfies condition \eqref{sumpi}, and for every $J \in [2]^3$, $\pi_J = \frac{2744}{24389}$. The lower and upper bounds are defined as follows:

$$\begin{aligned}
\underline{a}_{1111} &= 6, \quad \underline{a}_{2222} = 6, \qquad
\overline{a}_{1111} = 7, \quad \overline{a}_{2222} = 7; \\
\underline{a}_{iJ} &= 1 \quad \text{for all } J\in[2]^3\setminus\{(i,i,i)\},\\
\overline{a}_{iJ} &= \frac{3}{2} \quad \text{for all } J\in[2]^3\setminus\{(i,i,i)\}.
\end{aligned}$$
For the first row, the lower bound diagonal entry is $\underline{a}_{1111} = 6$, while the sum of absolute values of the seven off-diagonal entries is $7 \times 1 = 7>6=\underline{a}_{1111}$, strict diagonal dominance does not hold. The same holds for the second row by symmetry. And Proposition \ref{p4.3} does not apply.

We now apply Theorem \ref{th1}. The lower bound row sum is $R_1(\underline{A}) = 6 + 7 \times 1 = 13 > 0$, condition $(a)$ holds. Since $0 < \pi_J \le 1$, we use sub-condition $(b_1)$, for all $i \in [2]$ and $J \neq (i,i,i)$,

$$\underline{a}_{iiii} + \sum_{\substack{K \neq J \\ K \neq (i,i,i)}} \underline{a}_{iK} > \Bigl(\frac{1}{\pi_J}-1\Bigr)\overline{a}_{iJ}.$$

Substituting $\frac{1}{\pi_J}-1 = \frac{21645}{2744}$, the left-hand side equals $R_i(\underline{A}) - \underline{a}_{iJ} = 13 - 1 = 12$, and the right-hand side equals $\frac{21645}{2744} \times \frac{3}{2} = \frac{64935}{5488} <12$, and the inequality holds strictly for all $i$, $J$.

All conditions of Theorem \ref{th1} are satisfied, hence $\mathcal{A}^I$ is an interval $B_{\pi}^{R^I}$-tensor. Since it is an even-order symmetric interval tensor, Corollary \ref{c4.5} guarantees that $\mathcal{A}^I$ is a positive definite interval tensor
\end{example}
When a suitable vector $\pi$ is not easy to guess, the constructive approach of Theorem \ref{thpi} can be used.

\begin{example}
 Let $\mathcal{A}^I$ be a symmetric interval tensor in $T_{4,2}$. The lower bounds of all off-diagonal entries are $\frac{1}{20}$, with diagonal entries equal to $1$, and the upper bounds are defined as follows:
$$\overline{a}_{1111} = \overline{a}_{2222} = \frac{3}{2},$$
$$\overline{a}_{1112} = \overline{a}_{1121} = \overline{a}_{1211} = \overline{a}_{2221} = \overline{a}_{2212} = \overline{a}_{2122} = \frac{37}{200},$$
$$\overline{a}_{1122} = \overline{a}_{1212} = \overline{a}_{1221} = \overline{a}_{2211} = \overline{a}_{2121} = \overline{a}_{2112} = \frac{29}{200} ,$$
$$\overline{a}_{1222} = \overline{a}_{2111} = \frac{1}{8}.$$
This tensor also satisfies the premise of Theorem \ref{thpi}, with row sums of the lower bound tensor $$R_1(\underline{\mathcal{A}})=R_2(\underline{\mathcal{A}})=\frac{27}{20}>0.$$
It is not an interval $Z$-tensor because $\overline{a}_{1112}=\frac{37}{200}>0$, nor is it a strictly diagonally dominant interval tensor because $\underline{a}_{1111}=1<\frac{223}{200}$.

Applying Theorem \ref{thpi} for verification, the row sums of the upper bound tensor are $\frac{523}{200}$. Calculations yield $$\overline{M}_{111}=\overline{M}_{222}=\frac{5}{57},\ \  \overline{M}_{112}=\frac{37}{297},\mbox{ and } \overline{M}_{122}=\frac{29}{289},$$
leading to
$$\overline{S}=2\times\left(\frac{37}{297}\right)^{1/3}<1,$$
which satisfies the core condition. Constructing the positive vector $\pi_1=\pi_2=\frac{1}{2}$, verification shows $\pi_{111}=\pi_{222}=\frac{1}{8}>\frac{5}{57}$, $\pi_{112}=\frac{1}{8}>\frac{37}{297}$, and $\pi_{122}=\frac{1}{8}>\frac{29}{289}$, so $\mathcal{A}^I$ is an interval $B_{\pi}^{R^I}$-tensor. Since $m=4$ is even, $\mathcal{A}^I$ is symmetric, and $\pi\geq0$, by Corollaries \ref{c4.1} and \ref{c4.5}, this interval tensor is both an interval $P$-tensor and a positive definite interval tensor.
\end{example}

\section{Conclusions}
In this paper, we have systematically investigated the theory and recognition criteria for interval $B_{\pi}^{R^I}$-tensors. We provide a corrected characterization of $B_{\pi}^{R^I}$-tensors, several new properties of individual $B_{\pi}^{R^I}$-tensors, and the discrimination theory for interval $B_{\pi}^{R^I}$-tensors, featuring algebraic equivalent conditions and a constructive existence criterion. Numerical examples demonstrate that our criteria can effectively certify positive definiteness of interval tensors when classical strict diagonal dominance and $Z$-tensor criteria are inapplicable.

It is worth noting that the definition of interval $B_{\pi}^{R^I}$-tensors adopted in this paper requires the existence of a single vector $\pi$ such that every tensor in the interval is a $B_{\pi}^{R}$-tensor with respect to that same $\pi$. This approach can be viewed as a natural extension of the ``homogeneous" interval matrix definition to the tensor setting. In the matrix context, there also exists a so-called ``heterogeneous" definition, which only requires that each matrix individually admits some vector satisfying condition \eqref{sumpi}, without insisting on a unified $\pi$ for the whole interval family. For matrices, the ``homogeneous" and ``heterogeneous" definitions are equivalent; whether this equivalence extends to higher-order tensors remains an open problem. Closing the gap between the sufficient conditions established here and necessary ones is another direction for future work.

\section*{Competing interest}
The author declares that he has no known competing financial interests or personal relationships that could have appeared to influence the work reported in this paper.

\section*{Availability of data and materials}
This manuscript has no associated data or the data will not be deposited. [Authors' comment: This is a theoretical study and there are no external data associated with the manuscript].
\section*{Funding}




\begin{thebibliography}{ }
\bibitem{b1} Qi, L., Lay Teo, K.: Multivariate polynomial minimization and its application in signal processing. J. Global Optim. {\bf26}(4), 419-433 (2003).
\bibitem{b2} Song, Y., Qi, L.: Boundedness from below conditions for a general scalar potential of two real scalars fields and the Higgs boson. Theor. Math. Phys. {\bf220}(3), 591-604 (2024).
\bibitem{b3} Chen, H., Qi, L., Song, Y.: Column sufficient tensors and tensor complementarity problems. Front. Math. China {\bf13}(2), 255-276 (2018).
\bibitem{b4} Bose, N.K., Kamat, P.S.: Algorithm for stability test of multidimensional filters. IEEE Trans. Acoust. Speech Signal Process. {\bf22}, 307-314 (1974).
\bibitem{ql2017} Qi, L., Luo, Z.: Tensor Analysis: Spectral Theory and Special Tensors. SIAM, Philadelphia (2017).
\bibitem{sq2015} Song, Y., Qi, L.: Properties of some classes of structured tensors. J. Optim. Theory Appl. {\bf165}(3), 854-873 (2015).
\bibitem{dlq2018} Ding, W., Luo, Z., Qi, L.: $P$-tensors, P$_0$-tensors, and their applications. Linear Algebra Appl. {\bf555}, 336-354 (2018).
\bibitem{op2021} Orera, H., Pe\~{n}a, J.M.: Error bounds for linear complementarity problems of $B_{\pi}^R$-matrices. Comput. Appl. Math. {\bf40}(3), 94:1-13 (2021).
\bibitem{qs2014} Qi, L., Song, Y.: An even order symmetric $B$ tensor is positive definite. Linear Algebra Appl. {\bf457}, 303-312 (2014).
\bibitem{cy2015} Li, C., Li, Y.: Double $B$-tensors and quasi-double $B$-tensors. Linear Algebra Appl. {\bf466}, 343-356 (2015).
\bibitem{d2} Li, C., Qi, L., Li, Y.: $MB$-tensors and $MB_0$-tensors. Linear Algebra Appl. {\bf484}, 141-153 (2015).
\bibitem{bpi2} Orera, H., Pe\~{n}a, J.M.: $B_{\pi}^R$-tensors. Linear Algebra Appl. {\bf 581}, 247-259 (2019).
\bibitem{bpi1} He, J., Liu, Y., Lv, W.: Some classes of nonsingular tensors and application. Linear Multilinear Algebra. {\bf72}(7), 1078-1093  (2024).
\bibitem{boz2020} Bozorgmanesh, H., Hajarian, M., Chronopoulos, A.Th.: Interval tensors and their application in solving multilinear systems of equations. Comput. Math. Appl. {\bf79}(3), 697-715 (2020).
\bibitem{sr2020} Rahmati, S., Tawhid, M.A.: On intervals and sets of hypermatrices (tensors). Front. Math. China {\bf15}(6), 1175-1188 (2020).
\bibitem{bf2022} Beheshti, R., Fathi, J., Zangiabadi, M.: Some classes of interval tensors and their properties. Wavelets and Linear Algebra {\bf9}(1), 49-65 (2022).
\bibitem{CZ2023} Cui, L., Zhang, X.: Bounds of H-eigenvalues of interval tensors, Comp. Appl. Math. {\bf42}, 280 (2023).
\bibitem{lm2023} Lorenc, M.: $B_{\pi}^R$-Matrices, $B$-Matrices, and Doubly $B$-Matrices in the Interval Setting. Acta Cybern. {\bf26}(1), 83-103 (2023).
\bibitem{ys2026} Ye, L., Song, Y.: Interval $B$-Tensors and Interval Double $B$-Tensors. J. Optim. Theory Appl. {\bf209}(2), 55:1-39 (2026).
\bibitem{zq2014} Zhang, L. Qi, L., Zhou, G.: M-tensors and some applications. SIAM J. on Matrix Anal. Appl. {\bf35}(2), 437-452 (2014).
\bibitem{q2005} Qi, L.: Eigenvalues of a real supersymmetric tensor. J. Symb. Comput. {\bf40}(6), 1302-1324 (2005).
\bibitem{bhw2016} Bai, X., Huang, Z., Wang, Y.: Global uniqueness and solvability for tensor complementarity problems. J. Optim. Theory Appl. {\bf170}(1), 72-84 (2016).

\end{thebibliography}

\end{document}